\newcommand{\impli}{\Rightarrow}
\newcommand{\Nat}{\mathbb{N}}
\newcommand{\N}{\mathbb{N}}
\newcommand{\erre}{\mathbb{R}}
\newcommand{\sub}{\subseteq}
\def\epsilon{\varepsilon}
\newtheorem{theorem}{Theorem}[section]
\newtheorem{proposition}[theorem]{Proposition}
\newtheorem{corollary}[theorem]{Corollary}
\newtheorem{lemma}[theorem]{Lemma}
\theoremstyle{definition}
\newtheorem{definition}[theorem]{Definition}
\newtheorem{example}[theorem]{Example}
\newtheorem{remark}[theorem]{Remark}
\numberwithin{equation}{section}
\title{A class of summing operators acting in spaces of operators}
\author{J. Rodr\'{i}guez}
\address{Dpto. de Ingenier\'{i}a y Tecnolog\'{i}a de Computadores\\Facultad de Inform\'{a}tica\\
Universidad de Murcia\\ 30100 Espinardo (Murcia)\\ Spain} \email{joserr@um.es}
\author{E.A. S\'{a}nchez-P\'{e}rez}
\address{Instituto Universitario de Matem\'{a}tica Pura y Aplicada\\ Universitat Polit\`{e}cnica de Val\`{e}ncia\\
Camino de Vera s/n\\ 46022 Valencia\\ Spain} \email{easancpe@mat.upv.es}
\subjclass[2010]{46G10,47B10}
\keywords{Summing operator; dominated operator; $\epsilon$-product of Banach spaces; strong operator topology; universally measurable function}
\thanks{Research partially supported by {\em Agencia Estatal de Investigaci\'{o}n} [MTM2017-86182-P to J.R. and MTM2016-77054-C2-1-P to E.A.S.P.,
both grants cofunded by ERDF, EU]; and {\em Fundaci\'on S\'eneca} [20797/PI/18 to J.R.]}
\begin{document}

\begin{abstract}
Let $X$, $Y$ and $Z$ be Banach spaces and let $U$ be a subspace of $\mathcal{L}(X^*,Y)$, the Banach
space of all operators from $X^*$ to~$Y$. An operator $S: U \to Z$
is said to be $(\ell^s_p,\ell_p)$-summing (where $1\leq p <\infty$) if there is a constant $K\geq 0$ such that
$$
	\Big( \sum_{i=1}^n \|S(T_i)\|_Z^p \Big)^{1/p}
	\le K
	\sup_{x^* \in B_{X^*}} \Big(\sum_{i=1}^n \|T_i(x^*)\|_Y^p\Big)^{1/p} 
$$
for every $n\in \Nat$ and every $T_1,\dots,T_n \in U$. 
In this paper we study this class of operators, introduced by Blasco and Signes
as a natural generalization of the $(p,Y)$-summing operators of Kislyakov. 
On one hand, we discuss Pietsch-type domination results for $(\ell^s_p,\ell_p)$-summing operators. In this direction,
we provide a negative answer to a question raised by Blasco and Signes, and we also
give new insight on a result by Botelho and Santos.
On the other hand, we extend to this setting the classical theorem of Kwapie\'{n} characterizing those
operators which factor as $S_1\circ  S_2$, where $S_2$ is absolutely $p$-summing and
$S_1^*$ is absolutely $q$-summing ($1<p,q<\infty$ and $1/p+1/q \leq 1$).  
\end{abstract}

\maketitle

\section{Introduction}

Summability of series in Banach spaces is a classical central topic in the field of mathematical analysis. 
This study is faced from an abstract point of view as a part of the general analysis of the summability properties of operators, 
using some remarkable results of the theory of operator ideals. Pietsch's Factorization Theorem is nowadays the central tool in this topic, 
and different versions of this result adapted to other contexts are currently known. This theorem establishes that operators that transform weakly $p$-summable sequences 
into absolutely $p$-summable ones can always be dominated by an integral, and factored through a subspace of an $L_p$-space.  Some related relevant 
results can also be formulated in terms of integral domination and factorization of operators. For example, recall that an operator 
between Banach spaces $S:X \to Y$ is said to be
$(p,q)$-dominated (where $1<p,q<\infty$ and $1/p+1/q=1/r\leq 1$) if for every couple of finite sequences $(x_i)_{i=1}^n$ in~$X$ and $(y_i^*)_{i=1}^n$ in~$Y^*$, the
strong $\ell_r$-norm of the sequence $(\langle S(x_i), y_i^* \rangle )_{i=1}^n$ is bounded above by the product of the
weak $\ell_p$-norm of $(x_i)_{i=1}^n$ and the weak $\ell_q$-norm of $(y_i^*)_{i=1}^n$ 
(up to a multiplying constant independent of both sequences and their length). Kwapie\'{n}'s Factorization
Theorem~\cite{kwa} states that an operator is $(p,q)$-dominated 
if and only if it can be written as the composition $S_1\circ S_2$ of operators such that $S_2$ is absolutely $p$-summing 
and the adjoint $S_1^*$ is absolutely $q$-summing (cf. \cite[\S 19]{def-flo}).  

The aim of this paper is to continue with the specific study of the summability properties of operators defined on spaces of operators. 
Throughout this paper $X$, $Y$ and $Z$ are Banach spaces.

\begin{definition}[Blasco-Signes, \cite{bla-sig}]\label{definition:pPettisSumming}
Let $1\leq p<\infty$ and let $U$ be a subspace of $\mathcal L(X^*,Y)$. An operator $S: U \to Z$ is said to be {\em $(\ell^s_p,\ell_p)$-summing} if
there is a constant $K\geq 0$ such that
\begin{equation}\label{eqn:psumming}
	\Big( \sum_{i=1}^n \|S(T_i)\|_Z^p \Big)^{1/p}
	\le K
	\sup_{x^* \in B_{X^*}} \Big(\sum_{i=1}^n \|T_i(x^*)\|_Y^p\Big)^{1/p} 
\end{equation}
for every $n\in \Nat$ and every $T_1,\dots,T_n \in U$.
\end{definition}

Some fundamental properties of this type of operators are already known, as well as the main picture of their summability properties. 
The works of Blasco and Signes~\cite{bla-sig}
and Botelho and Santos \cite{bot-san} fixed the framework and solved a great part of the natural problems appearing in this context. In 
the particular case when $U$ is the injective tensor product $X \hat{\otimes}_\epsilon Y$
(naturally identified as a subspace of~$\mathcal{L}(X^*,Y)$), 
$(\ell^s_p,\ell_p)$-summing operators had been studied earlier by Kislyakov~\cite{kis} 
as ``$(p,Y)$-summing'' operators. In particular, he gave a Pietsch-type domination theorem
for $(\ell^s_p,\ell_p)$-summing operators defined on $X \hat{\otimes}_\epsilon Y$
(see \cite[Theorem~1.1.6]{kis}). This led to the natural question of whether
a Pietsch-type domination theorem holds for arbitrary $(\ell^s_p,\ell_p)$-summing operators, see \cite[Question~5.2]{bla-sig}.
Botelho and Santos extended Kislyakov's result by showing that this is the case when $U$ is Schwartz's $\epsilon$-product $X\epsilon Y$, i.e.
the subspace of all operators from~$X^*$ to~$Y$ which are ($w^*$-to-norm) continuous when restricted to~$B_{X^*}$
(see \cite[Theorem~3.1]{bot-san}). 

This paper is organized as follows.

In Section~\ref{section:Pietsch} we give new insight on the Botelho-Santos theorem
and we provide a negative answer to the aforementioned question, see Example~\ref{example:counterBS}.
To this end, we characterize those $(\ell^s_p,\ell_p)$-summing operators admitting
a Pietsch-type domination by means of the strong operator topology (Theorem~\ref{theorem:equiv}).
All of this is naturally connected with a discussion on measurability properties of operators which might be
of independent interest. 

In Section~\ref{section:Kwapien} we start a general analysis of the summability properties of operators defined on spaces of operators 
that imply similar properties for the adjoint maps. Our main result along this way is a Kwapie\'{n}-type theorem 
involving the special summation that arises in this setting related to the strong operator topology, see Theorem~\ref{theorem:equiv2}.

\subsubsection*{Notation and terminology}

All our Banach spaces are real and all our topological spaces are Hausdorff. By a {\em subspace} of a Banach space we mean a norm-closed linear subspace.
By an {\em operator} we mean a continuous linear map between Banach spaces.
The norm of a Banach space~$X$ is denoted by $\|\cdot\|_X$ or simply $\|\cdot\|$. We write
$B_X=\{x\in X:\|x\|\leq 1\}$ (the closed unit ball of~$X$). The topological dual of~$X$ is denoted by~$X^*$ and
we write $w^*$ for its weak$^*$-topology. The evaluation of a functional $x^*\in X^*$
at $x\in X$ is denoted by either $\langle x,x^*\rangle$ or $\langle x^*,x\rangle$.
We write $X\not \supseteq \ell_1$ to say that $X$ does not contain subspaces isomorphic to~$\ell_1$.
We denote by $\mathcal{L}(X^*,Y)$ the Banach space of all operators from~$X^*$ to~$Y$, equipped
with the operator norm. The {\em strong operator topology} ({\em SOT} for short) on $\mathcal{L}(X^*,Y)$
is the locally convex topology for which the sets 
$$
	\{T\in \mathcal{L}(X^*,Y): \, \|T(x^*)\|_Y<\epsilon\}, 
	\quad x^*\in X^*, 
	\quad \epsilon>0,
$$ 
are a subbasis of open neighborhoods of~$0$. That is, a net $(T_\alpha)$ in $\mathcal{L}(X^*,Y)$ is SOT-convergent to~$0$
if and only if $\|T_\alpha(x^*)\|_Y\to 0$ for every $x^*\in X^*$. Given a compact topological space~$L$, we denote by
$C(L)$ the Banach space of all real-valued continuous functions on~$L$, equipped with the supremum norm. 
Thanks to Riesz's representation theorem, the elements of $C(L)^*$ are identified with regular Borel signed measures on~$L$. 
We denote by $P(L) \sub C(L)^*$ the convex $w^*$-compact set of all regular Borel probability measures on~$L$.
For each $t\in L$, we write $\delta_t\in P(L)$ to denote the evaluation functional at~$t$, i.e. $\delta_t(h):=h(t)$
for all $h\in C(L)$. A function defined on~$L$ with values in a Banach space is said to be {\em universally strongly measurable} if it is
strongly $\mu$-measurable for all $\mu \in P(L)$. We will mostly consider the case when $L$ is the dual closed unit ball $B_{X^*}$
equipped with the weak$^*$-topology.

\section{Pietsch-type domination of $(\ell^s_p,\ell_p)$-summing operators}\label{section:Pietsch}

Throughout this section we fix $1\leq p<\infty$.

The aforementioned Pietsch-type domination theorem for $(\ell^s_p,\ell_p)$-summing operators proved in~\cite[Theorem~3.1]{bot-san} reads as follows:

\begin{theorem}[Botelho-Santos]\label{theorem:BS}
Let $U$ be a subspace of $X\epsilon Y$ and let $S:U\to Z$ be an $(\ell^s_p,\ell_p)$-summing operator. Then
there exist a constant $K\geq 0$ and $\mu \in P(B_{X^*})$ such that
\begin{equation}\label{eqn:BotelhoSantos}
	\|S(T)\|_Z \leq K \Big(\int_{B_{X^*}}\|T(\cdot)\|_{Y}^p \, d\mu\Big)^{1/p}
\end{equation}
for every $T\in U$.
\end{theorem}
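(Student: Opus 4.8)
The plan is to prove this via a Pietsch-style separation/Ky Fan minimax argument, exploiting the fact that for operators in the $\epsilon$-product $X\epsilon Y$, the map $x^*\mapsto \|T(x^*)\|_Y$ is continuous on $B_{X^*}$ (with its weak$^*$-topology), so we land in $C(B_{X^*})$ and can use Riesz representation. Writing $L := (B_{X^*}, w^*)$, which is a compact space, the key observation is that for $T \in X\epsilon Y$ the restriction $T|_{B_{X^*}}$ is $w^*$-to-norm continuous, hence $f_T := \|T(\cdot)\|_Y \in C(L)$. The summing inequality \eqref{eqn:psumming} says exactly that $S$ is bounded with respect to the seminorm $T \mapsto \sup_{x^*\in B_{X^*}} f_T(x^*)^{1/p}\cdot(\cdots)$, and the goal is to replace this supremum (an $L^\infty$-type quantity over $L$) by an $L^p(\mu)$ integral against a single probability measure $\mu \in P(L)$.

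First I would set up the standard convexity framework. Let $K$ be the summing constant from \eqref{eqn:psumming}. Consider, for each finite family $T_1,\dots,T_n \in U$, the function
\begin{equation}\label{eqn:phi}
	\phi_{(T_i)}(\mu) := \sum_{i=1}^n \|S(T_i)\|_Z^p - K^p \int_{L} \sum_{i=1}^n \|T_i(x^*)\|_Y^p \, d\mu(x^*),
\end{equation}
viewed as an affine function of $\mu \in P(L)$. The summing inequality, together with the identity $\sup_{x^*\in L}\sum_i \|T_i(x^*)\|_Y^p = \max_{\mu\in P(L)}\int_L \sum_i\|T_i(x^*)\|_Y^p\,d\mu$ (the supremum of a continuous function equals the max of its integrals against probability measures, since $\delta_{x^*}\in P(L)$), shows that for each fixed family there exists some $\mu$ making $\phi_{(T_i)}(\mu)\le 0$. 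I would then collect the family $\mathcal{C}$ of all functions $\phi_{(T_i)}$ as $(T_i)$ ranges over finite subsets of $U$, and verify that $\mathcal{C}$ is a \emph{concave} (indeed convex, being affine) cone of functions on the convex $w^*$-compact set $P(L)$ that is stable under the relevant operations: concatenation of families corresponds to addition, and scaling $T_i \mapsto \lambda T_i$ scales $\phi$ by $|\lambda|^p$, so $\mathcal{C}$ is convex up to this positive-homogeneity bookkeeping.

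The heart of the argument is a minimax/Ky Fan lemma. Having shown $\inf_{\mu}\sup_{\phi\in\mathcal{C}}\phi(\mu)$ behaves correctly, I would apply Ky Fan's lemma (or equivalently a Hahn--Banach separation argument in $C(L)^*$, separating the convex set generated by $\mathcal{C}$ from the positive cone) to interchange the order and produce a \emph{single} $\mu\in P(L)$ with $\phi(\mu)\le 0$ simultaneously for every $\phi\in\mathcal{C}$. Taking $\phi$ corresponding to a singleton family $\{T\}$ then yields
\begin{equation*}
	\|S(T)\|_Z^p \le K^p \int_{L}\|T(x^*)\|_Y^p\,d\mu(x^*),
\end{equation*}
which is exactly \eqref{eqn:BotelhoSantos} after taking $p$-th roots. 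The \textbf{main obstacle} I anticipate is purely the measurability/continuity step that makes the integral well-defined: the whole scheme collapses unless $x^* \mapsto \|T(x^*)\|_Y$ is a genuine element of $C(L)$ (or at least bounded and $\mu$-measurable), and this is precisely where the hypothesis $U \subseteq X\epsilon Y$ is indispensable --- it is exactly the continuity of $T|_{B_{X^*}}$ in the $w^*$-to-norm sense that places $f_T$ in $C(L)$ and lets Riesz/Ky Fan run. I would therefore begin by carefully recording that $f_T\in C(L)$ for $T\in X\epsilon Y$, and I expect that for general $U \subseteq \mathcal{L}(X^*,Y)$ this continuity fails (consistent with the negative answer promised in Section~\ref{section:Pietsch}), so the argument genuinely uses the $\epsilon$-product hypothesis and cannot be extended verbatim.
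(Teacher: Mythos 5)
Your proposal is correct and follows essentially the same route as the paper's direct proof: the same functions $\Delta_{\bar{T}}(\mu)=\sum_i\|S(T_i)\|_Z^p-K^p\int\sum_i\|T_i(\cdot)\|_Y^p\,d\mu$ on the $w^*$-compact convex set $P(B_{X^*})$, the same observation that Dirac measures at points where the $w^*$-continuous function $\sum_i\|T_i(\cdot)\|_Y^p$ attains its supremum make each $\Delta_{\bar{T}}\leq 0$, the same convex-cone structure via concatenation and the $\alpha^{1/p}$-rescaling of the $T_i$'s, and the same appeal to Ky Fan's Lemma to extract a single $\mu$. Your closing remark correctly isolates where $U\sub X\epsilon Y$ is used, namely that $\|T(\cdot)\|_Y\in C(B_{X^*},w^*)$, which is exactly the point the paper emphasizes before Remark~\ref{remark:BS}.
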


A first comment is that the integral of inequality~\eqref{eqn:BotelhoSantos} is always well-defined for any $T\in X\epsilon Y$ and $\mu\in P(B_{X^*})$. Indeed, 
the restriction $T|_{B_{X^*}}$ is ($w^*$-to-norm) continuous, so
it is universally strongly measurable. Since in addition $T|_{B_{X^*}}$ is bounded, it belongs to the Lebesgue-Bochner space $L_p(\mu,Y)$. 

\begin{remark}\label{remark:BS}
Actually, Theorem~\ref{theorem:BS} is proved in~\cite[Theorem~3.1]{bot-san} for operators~$S$ defined on 
a subspace~$U$ contained in
$$
	\mathcal{L}_{w^*,\|\cdot\|}(X^*,Y)=\{T\in \mathcal L(X^*,Y):\, T \text{ is ($w^*$-to-norm) continuous}\}.
$$ 
The proof given there is based on the abstract Pietsch-type domination theorem of Botelho, Pellegrino and Rueda~\cite{bot-pel-rue},
and the argument works for subspaces of $X\epsilon Y$ as well. We stress that $\mathcal{L}_{w^*,\|\cdot\|}(X^*,Y)$
consists of finite rank operators, one has
$$
	\overline{\mathcal{L}_{w^*,\|\cdot\|}(X^*,Y)}^{\|\cdot\|}=X\hat{\otimes}_\epsilon Y \sub X \epsilon Y 
$$
and, in general, $\mathcal{L}_{w^*,\|\cdot\|}(X^*,Y)\neq X\epsilon Y$.
\end{remark}

We next provide a more direct proof of Theorem~\ref{theorem:BS}. Yet another approach will be presented at the end of this section.

\begin{proof}[Proof of Theorem~\ref{theorem:BS}] For any $n\in \N$ and $\bar{T}=(T_1,\dots,T_n)\in U^n$, we define
$$
	\Delta_{\bar{T}}: P(B_{X^*}) \to \erre, \quad
	\Delta_{\bar{T}}(\mu):=\sum_{i=1}^n \|S(T_i)\|_Z^p
	- K^{p}
	\int_{B_{X^*}} \sum_{i=1}^n \|T_i(\cdot)\|_{Y}^p \, d\mu,
$$
where $K\geq 0$ is a constant as in Definition~\ref{definition:pPettisSumming}.
Clearly, $\Delta_{\bar{T}}$ is convex and $w^*$-continuous, because the real-valued function 
$$
	x^*\mapsto \sum_{i=1}^n \|T_i(x^*)\|_Y^p
$$ 
is $w^*$-continuous on~$B_{X^*}$. This function attains its supremum at some $x_{\bar{T}}^*\in B_{X^*}$. Bearing
in mind that $S$ is $(\ell^s_p,\ell_p)$-summing, we get $\Delta_{\bar{T}}(\delta_{x^*_{\bar{T}}})\leq 0$. 

Note also that the collection of all functions
of the form $\Delta_{\bar{T}}$ is a convex cone in~$\mathbb{R}^{P(B_{X^*})}$. Indeed, given $\bar{T}=(T_1,\dots,T_n)\in U^n$,
$\bar{R}=(R_1,\dots,R_m)\in U^m$, $\alpha\geq 0$ and $\beta \geq 0$, we have
$\alpha\Delta_{\bar{T}}+\beta\Delta_{\bar{R}}=\Delta_{\bar{H}}$, where 
$$
	\bar{H}=(\alpha^{1/p}T_1,\dots,\alpha^{1/p}T_n,\beta^{1/p}R_1,\dots,\beta^{1/p}R_m).
$$
Therefore, by Ky Fan's Lemma (see e.g. \cite[Lemma~9.10]{die-alt}),
there is $\mu \in P(B_{X^*})$ such that $\Delta_{\bar{T}}(\mu)\leq 0$ for all functions of the form $\Delta_{\bar{T}}$.
In particular, inequality~\eqref{eqn:BotelhoSantos} holds for all $T\in U$.
\end{proof}

Clearly, in order to extend the statement of Theorem~\ref{theorem:BS} to other subspaces $U$ of $\mathcal{L}(X^*,Y)$,
the real-valued map $\|T(\cdot )\|_Y$ needs to be $\mu$-measurable for every $T\in U$. This holds automatically if $U$ is a subspace of
$$
	\mathcal{UM}(X^*,Y):=\{T\in \mathcal{L}(X^*,Y): \, T|_{B_{X^*}} \mbox{ is universally strongly measurable}\}.
$$
Note that $\mathcal{UM}(X^*,Y)$ is a SOT-sequentially closed subspace of $\mathcal{L}(X^*,Y)$.

\begin{example}\label{example:UM1}
\begin{enumerate}
\item[(i)] We have $X\epsilon Y \sub \mathcal{UM}(X^*,Y)$ according to the comment preceding Remark~\ref{remark:BS}. 
\item[(ii)] More generally, {\em every ($w^*$-to-weak) continuous operator from~$X^*$ to~$Y$
belongs to $\mathcal{UM}(X^*,Y)$.} Indeed, just bear in mind that any weakly continuous function from a compact topological space to a Banach space is universally strongly measurable,
see \cite[Proposition~4]{ari-alt}. We stress that, by the Banach-Dieudonn\'{e} theorem, an operator $T:X^*\to Y$ is ($w^*$-to-weak) continuous
if and only if the restriction $T|_{B_{X^*}}$ is ($w^*$-to-weak) continuous. 
\item[(iii)] In particular, {\em if $X$ is reflexive, then $\mathcal{L}(X^*,Y)=\mathcal{UM}(X^*,Y)$}.
\end{enumerate}
\end{example}

\begin{example}\label{example:UM2}
{\em If $X \not \supseteq \ell_1$, then every $T\in \mathcal{L}(X^*,Y)$ with separable range
belongs to~$\mathcal{UM}(X^*,Y)$.} Indeed, 
a result of Haydon~\cite{hay-J} (cf. \cite[Theorem~6.9]{van})
states that $X^{**}=\mathcal{UM}(X^*,\mathbb{R})$ if and only if $X\not\supseteq \ell_1$. The conclusion now follows from 
Pettis' measurability theorem applied to $T|_{B_{X^*}}$ and each $\mu\in P(B_{X^*})$, 
see e.g. \cite[p.~42, Theorem~2]{die-uhl-J}. 
\end{example}

So, we will look for conditions ensuring that an $(\ell^s_p,\ell_p)$-summing operator
defined on a subspace of $\mathcal{UM}(X^*,Y)$ is $(\ell^s_p,\ell_p)$-controlled, according to the following:

\begin{definition}\label{definition:dominated}
Let $U$ be a subspace of $\mathcal{UM}(X^*,Y)$. An operator $S: U \to Z$ is said to be {\em $(\ell^s_p,\ell_p)$-controlled}
if there exist a constant $K\geq 0$ and $\mu \in P(B_{X^*})$ such that
\begin{equation}\label{eqn:domi}
	\|S(T)\|_Z \leq K \Big(\int_{B_{X^*}} \|T(\cdot)\|_{Y}^p\, d\mu\Big)^{1/p} 
\end{equation}
for every $T\in U$.
\end{definition}

\begin{proposition}\label{proposition:facto}
Let $U$ be a subspace of $\mathcal{UM}(X^*,Y)$ and let $S: U \to Z$ be an operator.
Then $S$ is $(\ell^s_p,\ell_p)$-controlled if and only if there exist $\mu\in P(B_{X^*})$, a subspace $W \sub L_p(\mu,Y)$ and an operator $\tilde{S}:W \to Y$ such that $S$ factors as
$$
	\xymatrix@R=3pc@C=3pc{U
	\ar[r]^{S} \ar[d]_{i_\mu|_U} & Z\\
	W  \ar@{->}[ur]_{\tilde{S}}  & \\
	}
$$
where $i_\mu:\mathcal{UM}(X^*,Y)\to L_p(\mu,Y)$ is the operator that maps each $T\in \mathcal{UM}(X^*,Y)$ to the equivalence
class of $T|_{B_{X^*}}$ in~$L_p(\mu,Y)$. 
\end{proposition}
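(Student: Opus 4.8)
The plan is to treat the two implications separately; the reverse implication is an immediate norm estimate, while the forward one is the standard ``factor through the kernel'' construction used in Pietsch-type theorems.

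First I would dispose of the sufficiency. Assume $S=\tilde{S}\circ i_\mu|_U$ for some $\mu\in P(B_{X^*})$, a subspace $W\sub L_p(\mu,Y)$ and an operator $\tilde{S}\colon W\to Z$. Then for every $T\in U$ one has
$$
	\|S(T)\|_Z=\|\tilde{S}(i_\mu(T))\|_Z\le \|\tilde{S}\|\,\|i_\mu(T)\|_{L_p(\mu,Y)}=\|\tilde{S}\|\Big(\int_{B_{X^*}}\|T(\cdot)\|_Y^p\,d\mu\Big)^{1/p},
$$
so $S$ is $(\ell^s_p,\ell_p)$-controlled with constant $K=\|\tilde{S}\|$.

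For the necessity, assume $S$ is $(\ell^s_p,\ell_p)$-controlled and fix $K$ and $\mu$ as in~\eqref{eqn:domi}. The point is that the defining inequality reads $\|S(T)\|_Z\le K\|i_\mu(T)\|_{L_p(\mu,Y)}$ for all $T\in U$; here the integral, equivalently $i_\mu$, is well-defined precisely because $U\sub \mathcal{UM}(X^*,Y)$ and each $T|_{B_{X^*}}$ is bounded, hence lies in $L_p(\mu,Y)$. The key step is to define $\tilde{S}$ on the image $i_\mu(U)$ by $\tilde{S}(i_\mu(T)):=S(T)$ and to check well-definedness: linearity of $i_\mu$ and the controlled inequality show that $i_\mu(T)=i_\mu(T')$ forces $\|S(T)-S(T')\|_Z=\|S(T-T')\|_Z\le K\|i_\mu(T-T')\|_{L_p(\mu,Y)}=0$, so $\tilde{S}$ is independent of the chosen representative. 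The same inequality shows that $\tilde{S}\colon i_\mu(U)\to Z$ is linear and bounded with $\|\tilde{S}\|\le K$. I would then set $W:=\overline{i_\mu(U)}$, the norm-closure in $L_p(\mu,Y)$, which is a subspace; since $Z$ is complete, $\tilde{S}$ extends uniquely from the dense subspace $i_\mu(U)$ to an operator $\tilde{S}\colon W\to Z$ of the same norm. By construction $i_\mu|_U$ maps $U$ into $W$ and $\tilde{S}(i_\mu(T))=S(T)$ for every $T\in U$, so the triangle commutes.

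As for obstacles, there is essentially no hard analytic point: the entire content is the elementary observation that the controlled inequality~\eqref{eqn:domi} is exactly the statement that $S$ factors boundedly through $i_\mu$, so the only thing to verify with any care is the well-definedness of $\tilde{S}$ on $i_\mu(U)$, which is immediate from linearity together with~\eqref{eqn:domi}. The hypothesis $U\sub\mathcal{UM}(X^*,Y)$ (rather than merely $U\sub\mathcal{L}(X^*,Y)$) is needed only to guarantee that $i_\mu$ is a well-defined operator into $L_p(\mu,Y)$ in the first place.
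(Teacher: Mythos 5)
Your proposal is correct and follows essentially the same route as the paper's own proof: the reverse implication is the immediate norm estimate $\|S(T)\|_Z\le\|\tilde{S}\|\,\|i_\mu(T)\|_{L_p(\mu,Y)}$, and the forward implication defines $\tilde{S}$ on $i_\mu(U)$ by $\tilde{S}(i_\mu(T)):=S(T)$, using inequality~\eqref{eqn:domi} for well-definedness and boundedness, and then extends by continuity to $W:=\overline{i_\mu(U)}$. Your write-up merely makes explicit the well-definedness check and the density extension that the paper leaves implicit (and you rightly read the codomain of $\tilde{S}$ as $Z$, correcting the typo ``$\tilde{S}:W\to Y$'' in the statement).
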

\begin{proof}
It is clear that such factorization implies that $S$ is $(\ell^s_p,\ell_p)$-controlled. Conversely, inequality~\eqref{eqn:domi} in Definition~\ref{definition:dominated}
allows us to define a linear continuous map $\tilde{S}_0: i_\mu(U) \to Z$ by declaring $\tilde{S}_0(i_\mu(T)):=S(T)$ for all $T\in U$.
Now, we can extend $\tilde{S}_0$ to an operator $\tilde{S}$ from $W:=\overline{i_\mu(U)}$ to~$Z$. Clearly, we have $\tilde{S}\circ i_\mu|_U=S$.
\end{proof}

We next give a couple of applications of Proposition~\ref{proposition:facto} related to topological properties
of $(\ell^s_p,\ell_p)$-controlled operators.

The class of Banach spaces~$X$ such that $L_1(\mu)$ is separable for every $\mu \in P(B_{X^*})$ 
is rather wide. It contains, for instance, all weakly compactly generated Banach spaces
(cf. \cite[Theorem~13.20 and Corollary~14.6]{fab-ultimo}) 
as well as all Banach spaces not containing subspaces isomorphic to~$\ell_1$ (see \cite[Proposition~B.1]{avi-mar-ple}).
For such spaces we have:

\begin{corollary}\label{corollary:SeparableRange}
Suppose that $L_1(\mu)$ is separable for every $\mu \in P(B_{X^*})$ and that $Y$ is separable.
Let $U$ be a subspace of $\mathcal{UM}(X^*,Y)$ and let $S:U \to Z$ be an $(\ell^s_p,\ell_p)$-controlled operator. Then
$S$ has separable range.
\end{corollary}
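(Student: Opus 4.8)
The plan is to exploit the factorization provided by Proposition~\ref{proposition:facto}. Since $S$ is $(\ell^s_p,\ell_p)$-controlled, there exist $\mu\in P(B_{X^*})$, a subspace $W\sub L_p(\mu,Y)$ and an operator $\tilde{S}:W\to Z$ with $S=\tilde{S}\circ i_\mu|_U$. In particular, the range of~$S$ satisfies $S(U)=\tilde{S}(i_\mu(U))\sub \tilde{S}(W)$. As $\tilde{S}$ is continuous, the $\tilde{S}$-image of any separable subset of~$W$ is separable, so it suffices to prove that $W$ is separable; and for this it is enough to show that the ambient space $L_p(\mu,Y)$ is separable, since a subset of a separable metric space is separable. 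This reduces the whole statement to a separability property of a vector-valued $L_p$-space.

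First I would record that $L_p(\mu)$ is separable, which follows from the hypothesis that $L_1(\mu)$ is separable. Writing $d_\mu(A,B):=\mu(A\triangle B)$ for the pseudometric on the $\mu$-measurable sets, one has
$$
	\|\chi_A-\chi_B\|_{L_p(\mu)}^p=\mu(A\triangle B)=\|\chi_A-\chi_B\|_{L_1(\mu)},
$$
so the measure algebra is separable for the $L_p$- and $L_1$-metrics simultaneously. Since $\mu$-simple functions are dense in $L_p(\mu)$ and their coefficients may be taken rational, a countable dense family in $L_p(\mu)$ is then produced in the standard way. Next, using that $Y$ is separable, I would fix countable dense sets $\{f_k\}_k\sub L_p(\mu)$ and $\{y_j\}_j\sub Y$, and consider the countable collection $\mathcal{D}$ of all finite rational-coefficient linear combinations of the products $f_k\,y_j\in L_p(\mu,Y)$.

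The remaining task is to check that $\mathcal{D}$ is dense in $L_p(\mu,Y)$. Here I would invoke the standard fact that $Y$-valued $\mu$-simple functions are dense in $L_p(\mu,Y)$, and then approximate a given simple function $\sum_i \chi_{A_i}\,v_i$ by replacing each value $v_i\in Y$ with a nearby $y_j$ and each indicator $\chi_{A_i}$ with a nearby $f_k$ in $L_p(\mu)$, controlling the total error by the triangle inequality. This yields separability of $L_p(\mu,Y)$, hence of~$W$, and therefore $S(U)\sub\tilde{S}(W)$ is separable, as desired. I do not expect any genuine obstacle: the one point worth stating carefully is the approximation of vector-valued simple functions together with the transfer of separability from $L_1(\mu)$ to $L_p(\mu)$, but both are routine and the substantive work has already been done in Proposition~\ref{proposition:facto}.
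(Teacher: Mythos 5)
Your proof is correct and takes essentially the same route as the paper's: factor $S$ through a subspace $W\sub L_p(\mu,Y)$ via Proposition~\ref{proposition:facto} and conclude from the separability of $L_p(\mu,Y)$. The only difference is that you spell out the standard verification (separability of the measure algebra from that of $L_1(\mu)$, then density of simple functions) that the paper leaves implicit, and this verification is carried out correctly.
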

\begin{proof}
Under such assumptions, $L_p(\mu,Y)$ is separable for any $\mu\in P(B_{X^*})$. The result
now follows from Proposition~\ref{proposition:facto}.
\end{proof}

A subset of a Banach space is said to be {\em weakly precompact} if every sequence in it 
admits a weakly Cauchy subsequence. Rosenthal's $\ell_1$-theorem~\cite{ros} (cf. \cite[Theorem~5.37]{fab-ultimo}) 
characterizes weakly precompact sets as those which are bounded and contain no sequence equivalent to the unit basis of~$\ell_1$.
An operator between Banach spaces is said to be {\em weakly precompact} if it maps bounded sets to
weakly precompact sets; this is equivalent to saying that it factors through a Banach space not containing subspaces isomorphic to~$\ell_1$.
For more information on weakly precompact operators we refer the reader to~\cite{gon-abe}.

\begin{corollary}\label{corollary:IdealProperties}
Let $U$ be a subspace of $\mathcal{UM}(X^*,Y)$ and let $S:U \to Z$ be an $(\ell^s_p,\ell_p)$-controlled operator. Then:
\begin{enumerate}
\item[(i)] $S$ is weakly compact whenever $Y$ is reflexive.
\item[(iii)] $S$ is weakly precompact whenever $Y \not\supseteq \ell_1$.
\end{enumerate}
\end{corollary}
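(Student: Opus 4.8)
The plan is to run both assertions through the factorization of Proposition~\ref{proposition:facto}, after first normalizing the exponent~$p$.

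The preliminary reduction is to assume $1<p<\infty$. Since $\mu\in P(B_{X^*})$ is a probability measure, the power-mean inequality gives $\big(\int_{B_{X^*}}\|T(\cdot)\|_Y^p\,d\mu\big)^{1/p}\le \big(\int_{B_{X^*}}\|T(\cdot)\|_Y^q\,d\mu\big)^{1/q}$ for every $q\ge p$; here each integrand is bounded (hence $q$-integrable), because $T\in\mathcal{UM}(X^*,Y)$ satisfies $\|T(x^*)\|_Y\le\|T\|$ for all $x^*\in B_{X^*}$. Therefore any $(\ell^s_p,\ell_p)$-controlled operator is $(\ell^s_q,\ell_q)$-controlled for all $q\ge p$, with the same $\mu$ and constant, and we may replace $p$ by any exponent in $(1,\infty)$. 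Fixing such a~$p$, Proposition~\ref{proposition:facto} furnishes $\mu\in P(B_{X^*})$, a subspace $W\sub L_p(\mu,Y)$ and an operator $\tilde S\colon W\to Z$ with $S=\tilde S\circ i_\mu|_U$.

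For (i), I use that $L_p(\mu,Y)$ is reflexive whenever $Y$ is reflexive and $1<p<\infty$; hence its closed subspace $W$ is reflexive too. The image under $i_\mu|_U$ of a bounded subset of~$U$ is bounded in~$W$, thus relatively weakly compact, and the operator $\tilde S$ (being weak-to-weak continuous) carries it onto a relatively weakly compact subset of~$Z$. So $S$ is weakly compact.

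For (iii), I argue analogously, replacing reflexivity by the absence of~$\ell_1$. The crucial input is that, for $1<p<\infty$, $L_p(\mu,Y)\not\supseteq\ell_1$ whenever $Y\not\supseteq\ell_1$; granting it, the subspace $W$ contains no copy of~$\ell_1$, so $S$ factors through a Banach space not containing~$\ell_1$ and is weakly precompact by the characterization recalled just before the statement. The main obstacle is exactly this $\ell_1$-lemma, and I would establish it through Rosenthal's $\ell_1$-theorem \cite{ros}. Given a bounded sequence $(f_n)$ in $L_p(\mu,Y)$, the estimate $\int_A\|f_n\|_Y\,d\mu\le\big(\sup_n\|f_n\|_{L_p(\mu,Y)}\big)\,\mu(A)^{1-1/p}$ shows that $(f_n)$ is uniformly integrable; combining this with $Y\not\supseteq\ell_1$, a Bourgain--Talagrand type averaging argument extracts a weakly Cauchy subsequence. (The hypothesis $p>1$ is essential, since $L_1(\mu)$ already contains~$\ell_1$, which is precisely why the reduction above is needed.) Hence every bounded sequence in $L_p(\mu,Y)$ has a weakly Cauchy subsequence, i.e. $L_p(\mu,Y)\not\supseteq\ell_1$, which closes the argument.
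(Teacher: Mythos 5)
Your proof is correct and its core coincides with the paper's: both arguments factor $S$ through a subspace $W$ of $L_p(\mu,Y)$ via Proposition~\ref{proposition:facto} and then invoke, for $1<p<\infty$, the reflexivity of $L_p(\mu,Y)$ when $Y$ is reflexive and the fact that $L_p(\mu,Y)\not\supseteq\ell_1$ when $Y\not\supseteq\ell_1$ (the latter is exactly what the paper cites as \cite[Theorem~2.2.2]{cem-men}). The one genuine divergence is the case $p=1$: the paper keeps the $L_1$-factorization and observes that $i_\mu(B_U)\sub j(B_{L_2(\mu,Y)})$ for the identity operator $j\colon L_2(\mu,Y)\to L_1(\mu,Y)$, so that $i_\mu(B_U)$ is already relatively weakly compact (resp.\ weakly precompact); you instead prove a preliminary normalization --- an $(\ell^s_p,\ell_p)$-controlled operator is $(\ell^s_q,\ell_q)$-controlled for every $q\ge p$, since $\mu$ is a probability, $\|f\|_{L_p(\mu)}\le\|f\|_{L_q(\mu)}$, and $\|T(\cdot)\|_Y$ is bounded and $\mu$-measurable for $T\in\mathcal{UM}(X^*,Y)$ --- and then refactor with an exponent in $(1,\infty)$. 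Both devices rest on the same inequality between $L_1$- and $L_2$-norms for probability measures; yours removes the case split entirely and isolates a small observation of independent interest (upward monotonicity of the controlling exponent), while the paper's is slightly more economical in that it reuses the original factorization without invoking Proposition~\ref{proposition:facto} a second time. Two minor remarks: for the key lemma in (iii) you sketch a proof (Rosenthal's theorem, uniform integrability via H\"older, a Bourgain--Talagrand averaging argument) rather than citing it --- this is precisely the standard proof of the known result, so a citation suffices and your sketch, while pointing at the right tools, should not be passed off as a complete argument; and your parenthetical that $L_1(\mu)$ ``already contains $\ell_1$'' should be qualified to the infinite-dimensional case (normalized indicators of infinitely many disjoint sets of positive measure), though nothing in your argument depends on this remark.
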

\begin{proof} We consider a factorization of~$S$ as in Proposition~\ref{proposition:facto} and we distinguish two cases:

{\em Case $1<p<\infty$.} If $Y$ is reflexive, then so is $L_p(\mu,Y)$ (see e.g. \cite[p.~100, Corollary~2]{die-uhl-J}) and the same holds for~$W$, hence $S$ is weakly compact.
On the other hand, if $Y \not\supseteq \ell_1$, then $L_p(\mu,Y) \not\supseteq\ell_1$ (see e.g. \cite[Theorem~2.2.2]{cem-men})
and so $W\not\supseteq\ell_1$, hence $S$ is weakly precompact.

{\em Case $p=1$.} Let $j: L_2(\mu,Y)\to L_1(\mu,Y)$ be the identity operator. 
Since 
$$
	i_\mu(B_U) \sub j(B_{L_2(\mu,Y)}), 
$$
we deduce that $i_\mu(B_U)$ is relatively weakly compact
(resp. weakly precompact) whenever $Y$ is reflexive (resp. $Y \not\supseteq \ell_1$), and the same holds
for $S(B_U)=\tilde{S}(i_\mu(B_U))$.
\end{proof}

The following result shows the link between $(\ell^s_p,\ell_p)$-controlled
and $(\ell^s_p,\ell_p)$-summing operators.

\begin{theorem}\label{theorem:equiv}
Let $U$ be a subspace of~$\mathcal{UM}(X^*,Y)$ and let $S:U\to Z$ be an operator. Let us consider the following statements:
\begin{enumerate}
\item[(i)] $S$ is $(\ell^s_p,\ell_p)$-controlled. 
\item[(ii)] $S$ is $(\ell^s_p,\ell_p)$-summing and (SOT-to-norm) sequentially continuous.
\end{enumerate}
Then (i)$\impli$(ii). Moreover, both statements are equivalent whenever $U \cap X\epsilon Y$ is SOT-sequentially dense in~$U$.
\end{theorem}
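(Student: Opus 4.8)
The plan is to handle the two directions separately. For the implication (i)$\impli$(ii), I would start from the factorization provided by Proposition~\ref{proposition:facto}: if $S$ is $(\ell^s_p,\ell_p)$-controlled, then $S=\tilde{S}\circ i_\mu|_U$ for some $\mu\in P(B_{X^*})$ and some operator $\tilde{S}$ on $W=\overline{i_\mu(U)}\sub L_p(\mu,Y)$. The $(\ell^s_p,\ell_p)$-summing property is then essentially immediate from the defining inequality~\eqref{eqn:domi}, since for $T_1,\dots,T_n\in U$ one has
$$
	\sum_{i=1}^n \|S(T_i)\|_Z^p
	\le K^p \int_{B_{X^*}} \sum_{i=1}^n \|T_i(\cdot)\|_Y^p\, d\mu
	\le K^p \sup_{x^*\in B_{X^*}} \sum_{i=1}^n \|T_i(x^*)\|_Y^p,
$$
because $\mu$ is a probability measure. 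For the SOT-to-norm sequential continuity, I would take a sequence $(T_k)$ in~$U$ with $T_k\to 0$ in SOT, so $\|T_k(x^*)\|_Y\to 0$ for every $x^*\in B_{X^*}$; since $(T_k)$ is SOT-convergent it is pointwise bounded, hence norm-bounded by the uniform boundedness principle, so the integrands $\|T_k(\cdot)\|_Y^p$ are uniformly bounded on~$B_{X^*}$. The dominated convergence theorem then gives $\int_{B_{X^*}}\|T_k(\cdot)\|_Y^p\, d\mu\to 0$, whence $\|S(T_k)\|_Z\to 0$ by~\eqref{eqn:domi}.

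For the converse, assuming $U\cap X\epsilon Y$ is SOT-sequentially dense in~$U$, the idea is to first apply the Botelho-Santos theorem on the dense part and then extend by sequential continuity. Concretely, the restriction of $S$ to the subspace $V:=U\cap X\epsilon Y$ is an $(\ell^s_p,\ell_p)$-summing operator defined on a subspace of $X\epsilon Y$, so Theorem~\ref{theorem:BS} supplies $K\geq 0$ and $\mu\in P(B_{X^*})$ with
$$
	\|S(T)\|_Z \le K\Big(\int_{B_{X^*}}\|T(\cdot)\|_Y^p\, d\mu\Big)^{1/p}
	\quad\text{for all } T\in V.
$$
It then remains to propagate this inequality from $V$ to all of~$U$. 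Given $T\in U$, I would choose a sequence $(T_k)$ in~$V$ with $T_k\to T$ in SOT; the left-hand side $\|S(T_k)\|_Z$ converges to $\|S(T)\|_Z$ by the assumed SOT-to-norm sequential continuity of~$S$, so the whole problem reduces to controlling the limiting behavior of the right-hand side, i.e. of $\int_{B_{X^*}}\|T_k(\cdot)\|_Y^p\, d\mu$.

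The main obstacle I anticipate is precisely this passage to the limit on the right-hand side. Pointwise (SOT) convergence $T_k\to T$ gives $\|T_k(x^*)\|_Y\to\|T(x^*)\|_Y$ for each fixed $x^*\in B_{X^*}$, and since $T\in U\sub\mathcal{UM}(X^*,Y)$ the limit integrand $\|T(\cdot)\|_Y^p$ is $\mu$-measurable; but a single SOT-convergent sequence need not be norm-bounded a priori in a way that yields a uniform integrable dominating function, and Fatou's lemma only gives $\int \|T(\cdot)\|_Y^p\, d\mu \le \liminf_k \int\|T_k(\cdot)\|_Y^p\, d\mu$ in the wrong direction. I would therefore want to arrange the approximating sequence carefully — for instance invoking uniform boundedness of $(T_k)$ (again via Banach-Steinhaus, since SOT-convergent sequences are pointwise bounded) to obtain a constant bound $\|T_k(\cdot)\|_Y\le C$ on $B_{X^*}$, and then apply dominated convergence to conclude $\int\|T_k(\cdot)\|_Y^p\, d\mu\to\int\|T(\cdot)\|_Y^p\, d\mu$. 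With both sides converging appropriately, the inequality transfers to~$T$, establishing that $S$ is $(\ell^s_p,\ell_p)$-controlled.
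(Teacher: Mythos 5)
Your proposal is correct and follows essentially the same route as the paper: the forward implication via the factorization of Proposition~\ref{proposition:facto} (with Banach--Steinhaus plus dominated convergence giving SOT-to-norm sequential continuity), and the converse by applying Theorem~\ref{theorem:BS} to $U\cap X\epsilon Y$ and passing to the limit along an SOT-approximating sequence. The only cosmetic difference is that the paper extends the factorization $\tilde{S}\circ i_\mu$ from the dense subspace to all of~$U$ using the sequential continuity of~$i_\mu$, whereas you transfer the domination inequality directly via dominated convergence --- the same ingredients arranged slightly differently, and your resolution of the limiting step on the right-hand side (uniform boundedness of the sequence yielding a constant dominating function) is exactly what makes the paper's argument work too.
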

\begin{proof}
Suppose first that $S$ is $(\ell^s_p,\ell_p)$-controlled and consider a factorization of~$S$ as in Proposition~\ref{proposition:facto}. 
We will deduce that $S$ is $(\ell^s_p,\ell_p)$-summing and (SOT-to-norm) sequentially continuous by checking that so is~$i_\mu$.
On one hand, $i_\mu$ is $(\ell^s_p,\ell_p)$-summing, because 
for every $n\in \N$ and $T_1,\dots,T_n\in \mathcal{UM}(X^*,Y)$ we have
$$
	\sum_{i=1}^n \|i_\mu(T_i)\|_{L_p(\mu,Y)}^p=
	\int_{B_{X^*}}\sum_{i=1}^n \|T_i(\cdot)\|_Y^p \, d\mu
	\leq \sup_{x^*\in B_{X^*}} \sum_{i=1}^n \|T_i(x^*)\|_Y^p.
$$
On the other hand, $i_\mu$ is (SOT-to-norm) sequentially continuous. Indeed, let $(T_n)$ be a sequence in~$\mathcal{UM}(X^*,Y)$
which SOT-converges to~$0$, i.e. $\|T_n(x^*)\|_Y\to 0$ for every $x^*\in X^*$. 
By the Banach-Steinhaus theorem, 
$\sup\{\|T_n\|:\, n\in\mathbb{N}\}<\infty$.
From Lebesgue's dominated convergence theorem it follows that $(i_\mu(T_n))$ converges to~$0$ in the norm topology of~$L_p(\mu,Y)$.

Suppose now that (ii) holds and that $U \cap X\epsilon Y$ is SOT-sequentially dense in~$U$.
The restriction $S|_{U \cap X\epsilon Y}$ is $(\ell^s_p,\ell_p)$-summing
and so Theorem~\ref{theorem:BS} and Proposition~\ref{proposition:facto} ensure the existence of $\mu \in P(B_{X^*})$, a 
subspace $W \sub L_p(\mu,Y)$ and an operator $\tilde{S}:W\to Z$ such that $i_\mu(U\cap X\epsilon Y)\sub W$ and 
$$
	\tilde{S}\circ i_\mu|_{U\cap X\epsilon Y}=S|_{U \cap X\epsilon Y}.
$$
Then we have $i_\mu(U)\sub W$ and $\tilde{S}\circ i_\mu|_{U}=S$, because
$S$ and $i_\mu$ are (SOT-to-norm) sequentially continuous
and $U \cap X\epsilon Y$ is SOT-sequentially dense in~$U$.
Therefore, $S$ is $(\ell^s_p,\ell_p)$-controlled.
\end{proof}

We are now ready to present a negative answer to \cite[Question 5.2]{bla-sig}:

\begin{example}\label{example:counterBS}
Suppose that $X$ is not reflexive and $X^*$ is separable (e.g. $X=c_0$).
Then $X^{**}=\mathcal{UM}(X^*,\mathbb{R})$, every $S\in X^{***}$ is $(\ell^s_p,\ell_p)$-summing, but
no $S\in X^{***}\setminus X^*$ is $(\ell^s_p,\ell_p)$-controlled (as operators from $X^{**}$ to~$\mathbb{R}$).
\end{example}
\begin{proof} The equality $X^{**}=\mathcal{UM}(X^*,\mathbb{R})$
follows from the fact that $X\not\supseteq\ell_1$, according to 
Haydon's result which we already mentioned in Example~\ref{example:UM2}.
Every $S\in X^{***}$ is easily seen to be $(\ell^s_p,\ell_p)$-summing as an operator from $X^{**}$ to~$\mathbb{R}$
(use that $B_{X^*}$ is $w^*$-dense in~$B_{X^{***}}$, by Goldstine's theorem).
On the other hand, if $S\in X^{***}$ is $(\ell^s_p,\ell_p)$-controlled, then it is $w^*$-sequentially continuous
by Theorem~\ref{theorem:equiv} (bear in mind that SOT$=w^*$ on~$X^{**}$).  
Since $(B_{X^{**}},w^*)$ is metrizable (because $X^*$ is separable), 
the restriction $S|_{B_{X^{**}}}$ is $w^*$-continuous and so, by the Banach-Dieudonn\'{e} theorem,
$S$ is $w^*$-continuous, i.e. $S\in X^*$.
\end{proof}

In order to apply Theorem~\ref{theorem:equiv}, there are many examples of subspaces $U$ of $\mathcal{UM}(X^*,Y)$
for which $U \cap X\epsilon Y$ is SOT-sequentially dense in~$U$. 
An operator $T:X^*\to Y$ is said to be {\em affine Baire-1} (we write $T\in \mathcal{AB}(X^*,Y)$ for short)
if there is a sequence in $X\epsilon Y$ which SOT-converges to~$T$. 
Affine Baire-1 operators were studied by Mercourakis and Stamati~\cite{mer-sta} and 
Kalenda and Spurn\'{y}~\cite{kal-spu}. We present below some examples. Recall first that a Banach space
$Y$ has the {\em approximation property} ({\em AP}) if for each norm-compact set $C \sub Y$ and each $\epsilon>0$
there is a finite rank operator $R:Y \to Y$ such that $\|R(y)-y\|_Y\leq \epsilon$ for all $y\in C$. If in addition $R$ can be chosen in such a way that
$\|R\| \leq \lambda$ for some constant $\lambda\geq 1$ (independent of~$C$ and~$\epsilon$), then $Y$ is said to have
the {\em $\lambda$-bounded approximation property} ({\em $\lambda$-BAP}). A Banach space is said to have the {\em bounded
approximation property} ({\em BAP}) if it has the $\lambda$-BAP for some $\lambda\geq 1$. For instance,
every Banach space with a Schauder basis has the BAP. In general, the AP and the BAP are different. However,
a separable dual Banach space has the AP if and only if it has the $1$-BAP. For more information
on these properties we refer the reader to~\cite{casazza}.

\begin{example}\label{example:weak}
{\em Suppose that $Y$ has the BAP. If $T \in \mathcal{L}(X^*,Y)$ is ($w^*$-to-weak) continuous and has separable range, then  
$T\in \mathcal{AB}(X^*,Y)$.}      
\end{example}
\begin{proof}
Let $\lambda\geq 1$ be a constant such that $Y$ has the $\lambda$-BAP.
Given any countable set $D \sub Y$, there is a sequence $(R_n)$ of finite rank operators on~$Y$ such that 
$\|R_n\|\leq \lambda$ for all $n\in \N$ and $\|R_n(y)-y\|_Y \to 0$ for every $y\in D$. Therefore,
$\|R_n(y)-y\|_Y \to 0$ for every $y\in \overline{D}$ (the norm-closure of~$D$). In particular, if this argument is applied to 
any countable set $D$ such that $D\sub T(X^*) \sub \overline{D}$, we get that the sequence $(R_n\circ T)$ is SOT-convergent to~$T$
in~$\mathcal{L}(X^*,Y)$. Note that each $R_n\circ T$ is ($w^*$-to-weak) continuous (because so is~$T$) and has finite rank, hence it 
belongs to~$\mathcal{L}_{w^*,\|\cdot\|}(X^*,Y)\sub X\epsilon Y$.
\end{proof}

\begin{example}\label{example:MS}
{\em Suppose that $X^*$ is separable and that either $X^*$ or $Y$ has the BAP. Then}
$$
	\mathcal{L}(X^*,Y) = \mathcal{AB}(X^*,Y),
$$ 
see \cite[Theorems~2.18 and~2.19]{mer-sta}. The proofs of these results contain a gap which
was commented and corrected in \cite[Remark~4.4]{kal-spu}. Note that the separability assumption on~$Y$ that appears 
in the statement of \cite[Theorem~2.19]{mer-sta} can be removed by using the arguments of~\cite{kal-spu}.
\end{example}

Clearly, $\mathcal{AB}(X^*,Y)$ is a linear subspace of $\mathcal{L}(X^*,Y)$. It is norm-closed whenever $Y$ has the BAP, as we next show.
To this end, we use an argument similar to the usual proof that the uniform limit of 
a sequence of real-valued Baire-1 functions is Baire-1 (see e.g. \cite[Proposition~A.126]{luk-alt}). However,
some technicalities arise since we need to approximate with operators instead of arbitrary continuous maps.   

\begin{lemma}\label{lem:closed}
\it If $Y$ has the BAP, then $\mathcal{AB}(X^*,Y)$ is norm-closed in $\mathcal{L}(X^*,Y)$.
\end{lemma}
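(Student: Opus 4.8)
The plan is to imitate the classical proof that a uniform limit of real-valued Baire-one functions is Baire-one, replacing the pointwise truncation (which would destroy linearity) by composition with the bounded finite-rank operators furnished by the BAP.

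So let $(T_k)$ be a sequence in $\mathcal{AB}(X^*,Y)$ converging in norm to some $T\in\mathcal{L}(X^*,Y)$; I must produce a sequence in $X\epsilon Y$ that SOT-converges to~$T$. Passing to a subsequence I may assume $\|T-T_k\|\le 2^{-k}$, and I set $g_1:=T_1$ and $g_k:=T_k-T_{k-1}$ for $k\ge 2$, so that each $g_k\in\mathcal{AB}(X^*,Y)$, $\|g_k\|\le 3\cdot 2^{-k}=:\epsilon_k$, $\sum_k\epsilon_k<\infty$, and the partial sums $\sum_{k\le N}g_k=T_N$ converge to~$T$ both in norm and in SOT. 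The whole argument then rests on one approximation lemma, the substitute for truncation. \emph{Key step:} there is a constant $C$ depending only on~$Y$ such that every $g\in\mathcal{AB}(X^*,Y)$ admits a sequence $(h_n)$ in $X\epsilon Y$ with $h_n\to g$ in SOT and $\sup_n\|h_n\|\le C\|g\|$.

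Granting this, I would apply it to each $g_k$ to obtain $(h_{k,n})_n\sub X\epsilon Y$ with $h_{k,n}\to g_k$ in SOT and $\|h_{k,n}\|\le C\epsilon_k$, and then set $\phi_n:=\sum_{k=1}^\infty h_{k,n}$. Because $\sum_k\|h_{k,n}\|\le C\sum_k\epsilon_k<\infty$ uniformly in~$n$, each series converges in operator norm; since $X\epsilon Y$ is norm-closed in $\mathcal{L}(X^*,Y)$, we get $\phi_n\in X\epsilon Y$. Finally $\phi_n\to T$ in SOT: for fixed $x^*$,
\[
	\|\phi_n(x^*)-T(x^*)\|_Y\le\sum_{k=1}^\infty\|h_{k,n}(x^*)-g_k(x^*)\|_Y,
\]
and splitting this sum into a finite head (each term tending to~$0$ as $n\to\infty$) and a tail bounded by $(C+1)\|x^*\|\sum_{k>N}\epsilon_k$ shows that the left-hand side tends to~$0$. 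Hence $T\in\mathcal{AB}(X^*,Y)$.

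The main obstacle is the key step, i.e. approximating in SOT by elements of $X\epsilon Y$ with control on the operator norm; this is where the BAP is essential. I would proceed as follows. First, every $g\in\mathcal{AB}(X^*,Y)$ has separable range (it is an SOT-limit of operators with norm-compact images on $B_{X^*}$), so after factoring through a suitable separable subspace of~$X$ I may assume that $X$ itself is separable; this makes $(B_{X^*},w^*)$, and hence the SOT on norm-bounded subsets of $\mathcal{L}(X^*,Y)$, metrizable. Next, if $\lambda\ge 1$ witnesses the $\lambda$-BAP of~$Y$, I choose finite-rank operators $R_m$ on~$Y$ with $\|R_m\|\le\lambda$ and $R_m\to\mathrm{id}$ pointwise on the range of~$g$, so that $R_m\circ g\to g$ in SOT while $\|R_m\circ g\|\le\lambda\|g\|$. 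Each $R_m\circ g$ has finite-dimensional range, and a Goldstine / local-reflexivity argument then lets me approximate it, in SOT and with operator norm $\le(1+\delta)\lambda\|g\|$, by genuine finite-rank ($w^*$-to-norm continuous) operators lying in $X\epsilon Y$. Diagonalizing these approximations against the convergence $R_m\circ g\to g$, which is legitimate precisely because the SOT is now metrizable on the relevant bounded set, produces the sequence $(h_n)$ with $C=(1+\delta)\lambda$.

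The crux — and the one place where I expect real work — is this finite-dimensional-range approximation with norm control: having replaced $g$ by $R_m\circ g$, whose coordinate functionals lie in~$X^{**}$, I must realize it as an SOT-limit of operators whose coordinates lie in~$X$ without inflating the operator norm, which is exactly where Goldstine's theorem (or the principle of local reflexivity) together with the metrizability coming from separability does the job. Everything else is the routine telescoping-and-series bookkeeping described above.
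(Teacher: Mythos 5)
Your overall architecture is exactly that of the paper's proof: the paper also writes the limit as a telescoping series $T=\sum_k U_k$ with $\|U_k\|\le 2^{-k+1}$, chooses for each $U_k$ an SOT-approximating sequence $(S_{k,n})_n\sub X\epsilon Y$ with the norm control $\|S_{k,n}\|\le\lambda 2^{-k+1}$, and assembles $T_n:=\sum_{k=1}^n S_{k,n}$ (finite partial sums, where you use norm-convergent series in the norm-closed $X\epsilon Y$ -- a cosmetic difference; your head/tail estimate for SOT convergence is the same as the paper's). The difference is that the paper obtains your ``key step'' by citing the vector-valued version of Mokobodzki's theorem due to Kalenda and Spurn\'{y} \cite[Theorem~2.2]{kal-spu}, with constant $C=\lambda$ from the $\lambda$-BAP, whereas you try to prove it from scratch -- and that is where your argument has a genuine gap, at precisely the point you flag as the crux.

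Two steps of your sketch fail. First, the metrizability claim: separability of $X$ makes $(B_{X^*},w^*)$ metrizable, but on norm-bounded subsets of $\mathcal{L}(X^*,Y)$ the SOT is the topology of pointwise convergence on (a norm-dense subset of) $X^*$, so it is metrizable only when $X^*$ is \emph{norm}-separable; for $X=\ell_1$ the bounded subsets of $\mathcal{L}(\ell_\infty,\mathbb{R})=\ell_1^{**}$ with SOT $=w^*$ are already non-metrizable. Without first countability your diagonalization of the approximations of $R_m\circ g$ against $R_m\circ g\to g$ is unavailable, and diagonalizing a pointwise limit of pointwise limits is exactly the move that fails in general (it lands in Baire class~2) -- avoiding it is the whole reason the telescoping trick exists. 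Second, the ``Goldstine / local reflexivity'' step attacks the wrong difficulty: sequential $w^*$-reachability of the coordinates is automatic and needs no Odell--Rosenthal-type hypothesis, since if $(s_j)\sub X\epsilon Y$ SOT-converges to $g$ then $s_j^*(y^*)\in X$ (each $s_j$ is ($w^*$-to-weak) continuous) and $s_j^*(y^*)\to g^*(y^*)$ in the $w^*$-topology of $X^{**}$; moreover $(s_j)$ is automatically bounded by the Banach--Steinhaus theorem. The actual content of the key step is improving the automatic bound $M=\sup_j\|s_j\|$, which can be arbitrarily large compared with $\|g\|$, to $(1+\delta)\lambda\|g\|$ while retaining SOT convergence of a single \emph{sequence}. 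Goldstine yields nets, not sequences, and local reflexivity operates on finite-dimensional pieces without producing one norm-controlled SOT-convergent sequence; the bounded-approximation statement you need is precisely Mokobodzki's theorem (nontrivial already in the scalar case $Y=\mathbb{R}$, $\lambda=1$), whose BAP-vector-valued form is the cited \cite[Theorem~2.2]{kal-spu}. So your proof is correct modulo exactly the theorem the paper imports, and your proposed derivation of it does not work.
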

\begin{proof}  
Fix $\lambda\geq 1$ such that $Y$ has the $\lambda$-BAP.
Let $T \in \overline{\mathcal{AB}(X^*,Y)}^{\|\cdot\|}$ with $\|T\|=1$. Let $(U_k)$ be a sequence 
in $\mathcal{AB}(X^*,Y)$ such that $\|U_k\|\leq 2^{-k+1}$ for all $k\in \Nat$ and $T=\sum_{k\in \N}U_k$ in the operator norm. 
Given $k\in \N$, we can apply to~$U_k$ the vector-valued version of Mokobodzki's theorem proved in \cite[Theorem~2.2]{kal-spu} to obtain 
a sequence $(S_{k,n})_{n\in \N}$ in $X\epsilon Y$ such that 
\begin{itemize}
\item $(S_{k,n})_{n\in \N}$ SOT-converges to~$U_k$;
\item $\|S_{k,n}\|\leq \lambda 2^{-k+1}$ for all $n\in \N$. 
\end{itemize}
Define a sequence $(T_n)$ in $X \epsilon Y$ by
$$
	T_n:=\sum_{k=1}^n S_{k,n}
	\quad\mbox{for all }n\in \N.
$$
It is easy to check that $(T_n)$ SOT-converges to~$T$, hence $T\in \mathcal{AB}(X^*,Y)$.
\end{proof}

As usual, we denote by $\mathcal{K}(X^*,Y)$ the subspace of $\mathcal{L}(X^*,Y)$ consisting of all compact operators from~$X^*$ to~$Y$.
Clearly, we have $X\epsilon Y \sub \mathcal{K}(X^*,Y)$.

\begin{example}\label{example:MScompact}
\it Suppose that $X$ is separable and $X\not \supseteq \ell_1$.
\begin{enumerate}
\item[(i)]  Every finite rank operator $T:X^* \to Y$ is affine Baire-1.
\item[(ii)] If $Y$ has the BAP, then 
$$
	\mathcal{K}(X^*,Y) \sub \mathcal{AB}(X^*,Y).
$$
\end{enumerate}
\end{example}
\begin{proof} (i) It suffices to check it for rank one operators. Fix $x^{**}\in X^{**}$ and $y\in Y$
in such a way that $T(x^*)=\langle x^{**},x^*\rangle y$ for all $x^*\in X^*$.
Since $X$ is $w^*$-sequentially dense in~$X^{**}$ 
(by the Odell-Rosenthal theorem~\cite{ode-ros}, cf. \cite[Theorem~4.1]{van}), there is
a sequence $(x_n)$ in~$X$ which $w^*$-converges to~$x^{**}$. For each $n\in \N$
we define $T_n\in \mathcal{L}_{w^*,\|\cdot\|}(X^*,Y) \sub X\epsilon Y$
by declaring $T_n(x^*):=\langle x_n,x^*\rangle y$ for all $x^*\in X^*$.
Clearly, $(T_n)$ is SOT-convergent to~$T$.

(ii) Take any $T\in \mathcal{K}(X^*,Y)$. Since $Y$ has the AP, 
there is a sequence $(T_n)$ of finite rank operators from~$X^*$ to~$Y$ 
converging to~$T$ in the operator norm. Each $T_n$ is affine Baire-1 by~(i). An appeal to Lemma~\ref{lem:closed}
ensures that $T\in \mathcal{AB}(X^*,Y)$.
\end{proof}

The proof of Theorem~\ref{theorem:BS} makes essential use of the $w^*$-continuity on~$B_{X^*}$
of the real-valued map $\|T(\cdot)\|_Y$ for $T\in X\epsilon Y$. We next present an abstract Pietsch-type domination theorem
for $(\ell^s_p,\ell_p)$-summing operators that does not require that continuity assumption, at the price
of dominating with a {\em finitely additive} measure. As a consequence of this result, we will obtain another proof of Theorem~\ref{theorem:BS}.

Given a measurable space~$(\Omega,\Sigma)$, we denote by
$B(\Sigma)$ the Banach space of all bounded $\Sigma$-measurable real-valued functions on~$\Omega$, equipped with the supremum norm.
The dual $B(\Sigma)^*$ can be identified with the Banach space ${\rm ba}(\Sigma)$ of all 
bounded finitely additive real-valued measures on~$\Sigma$, equipped with the variation norm. The duality is given
by integration, that is, $\langle h,\nu \rangle=\int_\Omega h \, d\nu$ for every $h\in B(\Sigma)$ and $\nu\in {\rm ba}(\Sigma)$, see e.g.
\cite[p.~77, Theorem~7]{die-J}.

\begin{theorem}\label{theorem:FA}
Let $\Sigma$ be a $\sigma$-algebra on~$B_{X^*}$ and let $U$ be a subspace of $\mathcal{L}(X^*,Y)$ 
such that the restriction of $\|T(\cdot)\|_Y$ to~$B_{X^*}$ is $\Sigma$-measurable for every $T\in U$.
Let $S:U \to Z$ be an $(\ell^s_p,\ell_p)$-summing operator.
Then there exist a constant $K\geq 0$ and a finitely additive probability $\nu$ on~$\Sigma$ such that
\begin{equation}\label{eqn:FA}
	\|S(T)\|_Z \leq K \Big(\int_{B_{X^*}} \|T(\cdot)\|_{Y}^p \, d\nu \Big)^{1/p} 
\end{equation}
for every $T\in U$.
\end{theorem}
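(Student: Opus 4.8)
The plan is to follow the direct proof of Theorem~\ref{theorem:BS} given above, but to carry out the Ky Fan argument inside the dual of $B(\Sigma)$ rather than inside $C(B_{X^*})^*$. Write $P_{\mathrm{fa}}(\Sigma)$ for the set of all finitely additive probabilities on~$\Sigma$, regarded as a subset of ${\rm ba}(\Sigma)=B(\Sigma)^*$. A routine check shows that $P_{\mathrm{fa}}(\Sigma)$ is a $w^*$-compact convex set: positivity and total mass~$1$ are both $w^*$-closed conditions, and such measures have variation norm~$1$, so $P_{\mathrm{fa}}(\Sigma)$ is a $w^*$-closed subset of the (Banach-Alaoglu) $w^*$-compact unit ball of~${\rm ba}(\Sigma)$. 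For each $n\in\N$ and $\bar{T}=(T_1,\dots,T_n)\in U^n$, the function $f_{\bar{T}}:=\sum_{i=1}^n\|T_i(\cdot)\|_Y^p$ (restricted to~$B_{X^*}$) is bounded and $\Sigma$-measurable by hypothesis, hence $f_{\bar{T}}\in B(\Sigma)$, and I would then define
\[
	\Delta_{\bar{T}}:P_{\mathrm{fa}}(\Sigma)\to\erre,\qquad
	\Delta_{\bar{T}}(\nu):=\sum_{i=1}^n\|S(T_i)\|_Z^p-K^p\int_{B_{X^*}}f_{\bar{T}}\,d\nu,
\]
with $K$ a constant witnessing that $S$ is $(\ell^s_p,\ell_p)$-summing.

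Two of the ingredients transfer almost verbatim. First, $\Delta_{\bar{T}}$ is affine (hence convex) and $w^*$-continuous, since $\nu\mapsto\int_{B_{X^*}}f_{\bar{T}}\,d\nu=\langle f_{\bar{T}},\nu\rangle$ is, by the very definition of the $w^*$-topology, $w^*$-continuous for the fixed element $f_{\bar{T}}\in B(\Sigma)$. Second, the collection $\{\Delta_{\bar{T}}\}$ is a convex cone: given $\bar{T}\in U^n$, $\bar{R}\in U^m$ and $\alpha,\beta\geq 0$, the linearity of~$S$ and the homogeneity of $\|\cdot\|_Y^p$ give $\alpha\Delta_{\bar{T}}+\beta\Delta_{\bar{R}}=\Delta_{\bar{H}}$ with $\bar{H}=(\alpha^{1/p}T_1,\dots,\alpha^{1/p}T_n,\beta^{1/p}R_1,\dots,\beta^{1/p}R_m)$, exactly as in the proof of Theorem~\ref{theorem:BS}.

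The one place where the earlier argument used the $w^*$-continuity of $f_{\bar{T}}$ on~$B_{X^*}$ is in producing a point where $\Delta_{\bar{T}}\leq 0$; this is the step I expect to be the crux, and it is also what forces finite additivity. Here I would argue as follows. For every $x^*\in B_{X^*}$ the evaluation $\delta_{x^*}$ is a finitely additive probability on~$\Sigma$ with $\int_{B_{X^*}}f_{\bar{T}}\,d\delta_{x^*}=f_{\bar{T}}(x^*)$, so $\delta_{x^*}\in P_{\mathrm{fa}}(\Sigma)$ and hence
\[
	\sup_{\nu\in P_{\mathrm{fa}}(\Sigma)}\int_{B_{X^*}}f_{\bar{T}}\,d\nu\ \geq\ \sup_{x^*\in B_{X^*}}f_{\bar{T}}(x^*).
\]
Combining this displayed inequality with the defining inequality of an $(\ell^s_p,\ell_p)$-summing operator yields $\inf_{\nu}\Delta_{\bar{T}}(\nu)=\sum_{i=1}^n\|S(T_i)\|_Z^p-K^p\sup_{\nu}\int_{B_{X^*}}f_{\bar{T}}\,d\nu\leq 0$, and by $w^*$-compactness of $P_{\mathrm{fa}}(\Sigma)$ this infimum is attained; in particular there is at least one $\nu\in P_{\mathrm{fa}}(\Sigma)$ with $\Delta_{\bar{T}}(\nu)\leq 0$. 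The point is that no regularity of $f_{\bar{T}}$ is needed, since Dirac measures always belong to $P_{\mathrm{fa}}(\Sigma)$ for an arbitrary $\sigma$-algebra; this is precisely why the conclusion must be phrased for a finitely additive~$\nu$ rather than a genuine Borel probability.

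With these three facts in hand, Ky Fan's Lemma (in the form cited in the proof of Theorem~\ref{theorem:BS}) furnishes a single $\nu\in P_{\mathrm{fa}}(\Sigma)$ such that $\Delta_{\bar{T}}(\nu)\leq 0$ for every $\bar{T}$; note that the convex cone property makes the concavity hypothesis of the lemma automatic. Specializing to tuples of length $n=1$ and taking $p$-th roots then gives inequality~\eqref{eqn:FA} for all $T\in U$, which completes the proof.
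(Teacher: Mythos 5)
Your proof is correct, and its overall architecture is the paper's own: both arguments run Ky Fan's Lemma over the convex $w^*$-compact set of finitely additive probabilities inside ${\rm ba}(\Sigma)=B(\Sigma)^*$, with the same functions $\Delta_{\bar{T}}$, the same $w^*$-continuity observation, and the same convex-cone verification. The single point of genuine divergence is how each $\Delta_{\bar{T}}$ is shown to take a nonpositive value, which is indeed the crux once continuity of $\|T(\cdot)\|_Y$ is dropped. The paper picks, via Hahn--Banach, a norming functional $\eta_{\bar{T}}\in{\rm ba}(\Sigma)$ for $\sum_i \psi_{T_i}\in B(\Sigma)$ and uses positivity of this function to see that the variation $|\eta_{\bar{T}}|$ is itself a finitely additive probability pairing with $\sum_i\psi_{T_i}$ to give exactly $\sup_{x^*\in B_{X^*}}\sum_i\|T_i(x^*)\|_Y^p$, so that $\Delta_{\bar{T}}(|\eta_{\bar{T}}|)\le 0$ at that explicit point. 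You instead note that every Dirac $\delta_{x^*}$ lies in $P_{\mathrm{fa}}(\Sigma)$ --- correctly so, since $\delta_{x^*}(A)=1$ or $0$ according to whether $x^*\in A$ requires no measurability of singletons, and integrates every bounded $\Sigma$-measurable $f$ to $f(x^*)$ --- which gives $\sup_{\nu}\langle f_{\bar{T}},\nu\rangle\ge\sup_{x^*}f_{\bar{T}}(x^*)$ and hence $\inf_\nu \Delta_{\bar{T}}(\nu)\le 0$; you then rightly invoke $w^*$-compactness to get attainment of this infimum by the $w^*$-continuous affine map $\Delta_{\bar{T}}$. That attainment step is genuinely needed (the supremum of $f_{\bar{T}}$ itself need not be attained, so a single Dirac may fall short, as you implicitly acknowledge) and you supplied it. What each route buys: yours is slightly more elementary, using only Alaoglu, which you already needed for compactness of $P_{\mathrm{fa}}(\Sigma)$, and it makes transparent why finite additivity is the right price to pay; the paper's route produces an explicit measure $|\eta_{\bar{T}}|$ realizing the supremum in the duality $\langle \sum_i\psi_{T_i},\,|\eta_{\bar{T}}|\rangle = \sup_{x^*\in B_{X^*}}\sum_i\psi_{T_i}(x^*)$, where yours leaves the witnessing measure implicit in a compactness argument. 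Both are complete proofs of the statement.
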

\begin{proof}
For each $T\in U$ we define $\psi_T\in B(\Sigma)$ by 
$$
	\psi_T(x^*):=\|T(x^*)\|^p_Y \quad
	\mbox{for all }x^*\in B_{X^*}.
$$
Let $L \sub {\rm ba}(\Sigma)=B(\Sigma)^*$ be the convex $w^*$-compact set of all finitely additive probabilities on~$\Sigma$.
For any $n\in \N$ and $\bar{T}=(T_1,\dots,T_n)\in U^n$, we define
$$
	\Delta_{\bar{T}}: L \to \erre, \quad
	\Delta_{\bar{T}}(\nu):=\sum_{i=1}^n \|S(T_i)\|_Z^p
	- K^{p}
	\int_{K} \sum_{i=1}^n \psi_{T_i} \, d\nu,
$$
where $K\geq 0$ is a constant as in Definition~\ref{definition:pPettisSumming}.
Clearly, $\Delta_{\bar{T}}$ is convex and $w^*$-continuous. Moreover, 
by the Hahn-Banach theorem there is $\eta_{\bar{T}} \in{\rm ba}(\Sigma)$ 
with $\|\eta_{\bar{T}}\|_{{\rm ba}(\Sigma)}=1$ such that 
$$
	\Big\langle \sum_{i=1}^n \psi_{T_i},\eta_{\bar{T}} \Big\rangle=
	\Big\|\sum_{i=1}^n \psi_{T_i}\Big\|_{B(\Sigma)}.
$$
Bearing in mind that $\sum_{i=1}^n \psi_{T_i}\geq 0$, it follows that the variation $|\eta_{\bar{T}}| \in L$
satisfies
$$
	\Big\langle \sum_{i=1}^n \psi_{T_i},|\eta_{\bar{T}}| \Big\rangle=\sup_{x^*\in B_{X^*}}\sum_{i=1}^n \psi_{T_i}(x^*).
$$
Therefore, inequality~\eqref{eqn:psumming} in Definition~\ref{definition:pPettisSumming} yields
$$
	\Delta_{\bar{T}}\big(|\eta_{\bar{T}}|\big) = \sum_{i=1}^n \|S(T_i)\|_Z^p 
	- K^p \Big\langle \sum_{i=1}^n \psi_{T_i},|\eta_{\bar{T}}| \Big\rangle \leq 0.
$$

The collection of all functions of the form $\Delta_{\bar{T}}$ is easily seen to be a convex cone in~$\mathbb{R}^{L}$.
By Ky Fan's Lemma (see e.g. \cite[Lemma~9.10]{die-alt}),
there is $\nu \in L$ such that $\Delta_{\bar{T}}(\nu)\leq 0$ for all functions of the form $\Delta_{\bar{T}}$.
In particular, \eqref{eqn:FA} holds for every $T\in U$.
\end{proof}

\begin{proof}[Another proof of Theorem~\ref{theorem:BS}]
Let $\Sigma:={\rm Borel}(B_{X^*},w^*)$. 
Let $K$ and $\nu$ be as in Theorem~\ref{theorem:FA}. Define $\varphi\in B(\Sigma)^*$ by $\langle h,\varphi\rangle:=\int_{B_{X^*}}h \, d\nu$
for all $h\in B(\Sigma)$. Let $\mu\in C(B_{X^*})^*$ be the restriction of $\varphi$ to~$C(B_{X^*})$ (as a subspace of $B(\Sigma)$).
Then $\mu\in P(B_{X^*})$ and~\eqref{eqn:FA} now reads as
$$
	\|S(T)\|_Z \leq K \Big(\int_{B_{X^*}} \|T(\cdot)\|_{Y}^p \, d\mu \Big)^{1/p}
$$
for every $T\in U \sub X\epsilon Y$.
\end{proof}

\section{Kwapie\'{n}-type theorem for $(\ell^s_p,\ell^s_q)$-dominated operators}\label{section:Kwapien}

Throughout this section we fix $1< p, q< \infty$ such that $1/p + 1/q \leq 1$. Let $1\leq r < \infty$ be defined by $1/p + 1/q =1/r$. 
An operator 
$S:X\to Y$ is said to be {\em $(p,q)$-dominated} if there is a constant $K\geq 0$ such that
$$
	\Big( \sum_{i=1}^n | \langle S(x_i),y^*_i \rangle|^r \Big)^{1/r}
	\\ \le K
	\sup_{x^* \in B_{X^*}} \Big(\sum_{i=1}^n |\langle x_i,x^*\rangle|^p\Big)^{1/p}  \cdot 
 	\sup_{y \in B_{Y}} \Big(\sum_{i=1}^n | \langle y,y^*_i \rangle|^q\Big)^{1/q}
$$
for every $n\in \Nat,$ every $x_1,\dots,x_n \in X$ and every $y^*_1, \dots, y^*_n \in Y^*$.
The classical result of Kwapie\'{n}~\cite{kwa} mentioned in the introduction says that an operator between Banach spaces
is $(p,q)$-dominated if and only if it can be written as $S_1\circ S_2$ for some operators $S_1$ and $S_2$ such that $S_2$ is absolutely $p$-summing
and $S_1^*$ is absolutely $q$-summing (cf. \cite[\S 19]{def-flo}). Our aim in this section is
to extend Kwapie\'{n}'s result to the framework of $(\ell^s_p,\ell_p)$-summing operators, see Theorem~\ref{theorem:equiv2} below.

From now on we assume that $Z^*$ is a subspace of~$\mathcal{UM}(E^*,F)$ for some fixed Banach spaces $E$ and $F$.
Accordingly, the adjoint of any operator taking values in~$Z$ is defined on a subspace of~$\mathcal{UM}(E^*,F)$
and we can discuss whether it is $(\ell^s_q,\ell_q)$-summing or $(\ell^s_q,\ell_q)$-controlled.

\begin{definition}\label{definition:pqdom}
Let $U$ be a subspace of $\mathcal L(X^*,Y)$.
An operator $S: U \to Z$ is said to be {\em $(\ell^s_p,\ell^s_q)$-dominated} if there is a constant $K\geq 0$ such that
\begin{multline}\label{eqn:pqdom}
	\Big( \sum_{i=1}^n | \langle S(T_i),z^*_i \rangle|^r \Big)^{1/r}
	\\ \le K
	\sup_{x^* \in B_{X^*}} \Big(\sum_{i=1}^n \|T_i(x^*)\|_Y^p\Big)^{1/p}  \cdot 
 	\sup_{e^* \in B_{E^*}} \Big(\sum_{i=1}^n \| z^*_i (e^*)\|_{F}^q\Big)^{1/q} 
\end{multline}
for every $n\in \Nat,$ every $T_1,\dots,T_n \in U$ and every $z^*_1, \dots, z^*_n \in Z^*$.
\end{definition}

\begin{theorem}\label{theorem:equiv2} 
Let $U$ be a subspace of~$\mathcal{UM}(X^*,Y)$ and let $S:U\to Z$ be an operator. Consider the following statements:
\begin{enumerate}

\item[(i)] $S$ is $(\ell^s_p,\ell^s_q)$-dominated. 

\item[(ii)] There exist a constant $K\geq 0$ and measures  $\mu \in P(B_{X^*})$ and $\eta \in P(B_{E^{*}})$ such that
\begin{equation}\label{eqn:intpqdom}
	| \langle S(T), z^*\rangle|  \leq K \Big(\int_{B_{X^*}}\|T(\cdot)\|^p_{Y} \, d\mu\Big)^{1/p} 
	\cdot 
	\Big(\int_{B_{E^*}}\| z^*(\cdot)\|^q_{F} \, d\eta \Big)^{1/q}
\end{equation}
for every $T \in U\cap X\epsilon Y$ and every $z^* \in Z^*\cap E \epsilon F$.

\item[(iii)] There exist a constant $K\geq 0$ and measures  $\mu \in P(B_{X^*})$ and $\eta \in P(B_{E^{*}})$ such that
\eqref{eqn:intpqdom} holds for every $T \in U$ and every $z^* \in Z^*$.

\item[(iv)] There exist a Banach space $W$, an
$(\ell^s_p,\ell_p)$-controlled operator $S_2:U\to W$ and an operator
$S_1:W\to Z$ with $(\ell^s_q,\ell_q)$-controlled adjoint such that $S$ factors as $S= S_1 \circ S_2$.

\item[(v)] There exist a Banach space $W$, an
$(\ell^s_p,\ell_p)$-summing operator $S_2:U\to W$ and an operator
$S_1:W\to Z$ with $(\ell^s_q,\ell_q)$-summing adjoint such that $S$ factors as~$S= S_1 \circ S_2$.

\end{enumerate}
Then (iii)$\impli$(iv)$\impli$(v)$\impli$(i)$\impli$(ii). All statements are equivalent if, in addition, we assume that:
\begin{enumerate}
\item[(a)] the identity map on~$Z^*$ is (SOT-to-$w^*$) sequentially continuous;
\item[(b)] $Z^* \cap E\epsilon F$ is SOT-sequentially dense in~$Z^*$;
\item[(c)] $U \cap X\epsilon Y$ is SOT-sequentially dense in~$U$;
\item[(d)] $S$ is (SOT-to-norm) sequentially continuous.
\end{enumerate}
\end{theorem}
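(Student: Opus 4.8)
The plan is to establish the cycle by proving the always-valid chain (iii)$\impli$(iv)$\impli$(v)$\impli$(i)$\impli$(ii) and then closing the loop with (ii)$\impli$(iii) under (a)--(d). Everything is organized around the two canonical maps $i_\mu\colon\mathcal{UM}(X^*,Y)\to L_p(\mu,Y)$ and $i_\eta\colon\mathcal{UM}(E^*,F)\to L_q(\eta,F)$, so that \eqref{eqn:intpqdom} reads $|\langle S(T),z^*\rangle|\le K\,\|i_\mu(T)\|_{L_p(\mu,Y)}\,\|i_\eta(z^*)\|_{L_q(\eta,F)}$. For (iii)$\impli$(iv) I would run the Kwapie\'{n} factorization: set $W:=\overline{i_\mu(U)}\sub L_p(\mu,Y)$ and $S_2:=i_\mu|_U\colon U\to W$, which is $(\ell^s_p,\ell_p)$-controlled with constant $1$ by the very definition of $i_\mu$ (cf. Proposition~\ref{proposition:facto}). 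Inequality \eqref{eqn:intpqdom} forces $S(T)=0$ whenever $i_\mu(T)=0$, so $S_1(i_\mu(T)):=S(T)$ is a well-defined linear map on $i_\mu(U)$; since $i_\eta$ is bounded, \eqref{eqn:intpqdom} gives $\|S(T)\|_Z\le K\|i_\eta\|\,\|i_\mu(T)\|_W$, whence $S_1$ extends to an operator $S_1\colon W\to Z$ with $S=S_1\circ S_2$. Dualizing, $\langle i_\mu(T),S_1^*(z^*)\rangle=\langle S(T),z^*\rangle$, so taking the supremum over $T$ with $\|i_\mu(T)\|_W\le 1$ (which is legitimate as $i_\mu(U)$ is dense in $W$) yields $\|S_1^*(z^*)\|_{W^*}\le K\big(\int_{B_{E^*}}\|z^*(\cdot)\|_F^q\,d\eta\big)^{1/q}$; since $Z^*\sub\mathcal{UM}(E^*,F)$, this is exactly the assertion that $S_1^*$ is $(\ell^s_q,\ell_q)$-controlled.

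The implication (iv)$\impli$(v) is immediate from Theorem~\ref{theorem:equiv}, (i)$\impli$(ii), applied to both $S_2$ and $S_1^*$: a controlled operator is summing. For (v)$\impli$(i) I would write $\langle S(T_i),z_i^*\rangle=\langle S_2(T_i),S_1^*(z_i^*)\rangle$, bound it by $\|S_2(T_i)\|_W\|S_1^*(z_i^*)\|_{W^*}$, apply H\"{o}lder's inequality with the conjugate exponents $p/r$ and $q/r$ (note $r/p+r/q=1$), and finally invoke the summing inequalities for $S_2$ and $S_1^*$; this produces \eqref{eqn:pqdom} with constant the product of the two summing constants.

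The main obstacle is (i)$\impli$(ii), a two-measure Pietsch--Kwapie\'{n} domination, which I would obtain by a single application of Ky Fan's Lemma \cite[Lemma~9.10]{die-alt} on the convex $w^*$-compact set $P(B_{X^*})\times P(B_{E^*})$. The linearizing device is the scalar Young inequality $(ab)^r\le \frac{r}{p}a^p+\frac{r}{q}b^q$, which is precisely the conjugacy of $p/r$ and $q/r$. For each finite family $(T_i,z_i^*)_{i=1}^n$ with $T_i\in U\cap X\epsilon Y$ and $z_i^*\in Z^*\cap E\epsilon F$ set
\[
	\Delta(\mu,\eta):=\sum_{i=1}^n|\langle S(T_i),z_i^*\rangle|^r-K^r\tfrac{r}{p}\int_{B_{X^*}}\sum_{i=1}^n\|T_i(\cdot)\|_Y^p\,d\mu-K^r\tfrac{r}{q}\int_{B_{E^*}}\sum_{i=1}^n\|z_i^*(\cdot)\|_F^q\,d\eta.
\]
Each $\Delta$ is affine and $w^*$-continuous in $(\mu,\eta)$; here it is essential that $T_i\in X\epsilon Y$ and $z_i^*\in E\epsilon F$, so that the integrands are $w^*$-continuous, which is exactly why (ii) is stated with these restrictions. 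Evaluating at $(\delta_{x_0^*},\delta_{e_0^*})$, where $x_0^*$ and $e_0^*$ maximize the respective integrands, and combining the $(\ell^s_p,\ell^s_q)$-domination with the Young inequality shows $\Delta(\delta_{x_0^*},\delta_{e_0^*})\le 0$. The collection of such $\Delta$ is a convex cone: it is stable under concatenation of families, and $\alpha\Delta$ is recovered from the rescaled family $(\alpha^{1/p}T_i,\alpha^{1/q}z_i^*)_i$, the exponents matching because $1/p+1/q=1/r$. Ky Fan's Lemma then yields $\mu\in P(B_{X^*})$ and $\eta\in P(B_{E^*})$ with $\Delta(\mu,\eta)\le 0$ for every family; applying this to the rescaled single pair $(tT,z^*/t)$ and minimizing the resulting bound $\frac{r}{p}t^p\|T\|_{L_p(\mu)}^p+\frac{r}{q}t^{-q}\|z^*\|_{L_q(\eta)}^q$ over $t>0$ recovers the product form \eqref{eqn:intpqdom}.

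Finally, to close the cycle under (a)--(d) I would prove (ii)$\impli$(iii) by SOT-sequential approximation. Given $T\in U$ and $z^*\in Z^*$, use (c) and (b) to pick sequences $(T_n)$ in $U\cap X\epsilon Y$ and $(z_m^*)$ in $Z^*\cap E\epsilon F$ SOT-converging to $T$ and $z^*$. Applying \eqref{eqn:intpqdom} to each pair $(T_n,z_m^*)$, I let $n\to\infty$ first, using (d) so that $S(T_n)\to S(T)$ in norm and using the Banach--Steinhaus theorem together with Lebesgue's dominated convergence theorem (the universal strong measurability of the operators guarantees measurability of the integrands) to pass the $L_p$-integral to the limit; then I let $m\to\infty$, using (a) so that $\langle S(T),z_m^*\rangle\to\langle S(T),z^*\rangle$ and dominated convergence again for the $L_q$-integral. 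This yields \eqref{eqn:intpqdom} for all $T\in U$ and $z^*\in Z^*$, which is (iii), and together with the always-valid chain it establishes the equivalence of (i)--(v).
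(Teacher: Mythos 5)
Your proposal is correct and follows essentially the same route as the paper: the same factorization through $W=\overline{i_\mu(U)}$ with $S_2=i_\mu|_U$ for (iii)$\impli$(iv), Theorem~\ref{theorem:equiv} for (iv)$\impli$(v), H\"{o}lder with exponents $p/r$, $q/r$ for (v)$\impli$(i), Ky Fan's Lemma on $P(B_{X^*})\times P(B_{E^*})$ combined with Young's inequality and the cone rescaling $(\alpha^{1/p}T_i,\alpha^{1/q}z_i^*)$ for (i)$\impli$(ii), and SOT-sequential approximation via (a)--(d) for (ii)$\impli$(iii). The only cosmetic deviations are your one-parameter infimum over the pairs $(tT,z^*/t)$, whose $t\to\infty$ limit subsumes the paper's separate treatment of the degenerate cases $a=0$ or $b=0$ before it normalizes by $(\frac{1}{a}T,\frac{1}{b}z^*)$, and your iterated limit over pairs $(T_n,z_m^*)$ where the paper passes to the limit along the single sequence $(T_n,z_n^*)$.
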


For the sake of brevity it is convenient to introduce the following:

\begin{definition}\label{definition:admissible}
We say that the triple $(Z,E,F)$ is {\em admissible} 
if conditions~(a) and~(b) above hold.
\end{definition}

Before embarking on the proof of Theorem~\ref{theorem:equiv2} we present some examples of 
admissible triples. Recall that the {\em weak operator topology} ({\em WOT} for short) on $\mathcal{L}(E^*,F)$
is the locally convex topology for which the sets 
$$
	\{R\in \mathcal{L}(E^*,F): \, |\langle R(e^*),f^*\rangle|<\epsilon\}, 
	\quad e^*\in E^*, \quad f^*\in F^*, 
	\quad \epsilon>0,
$$ 
are a subbasis of open neighborhoods of~$0$. So, a net $(R_\alpha)$ in $\mathcal{L}(E^*,F)$ is WOT-convergent to~$0$
if and only if $(R_\alpha(e^*))$ is weakly null in~$F$ for every $e^*\in E^*$.

\begin{example}\label{example:wot-vs-weak}
{\em If $Z^* \sub E \epsilon F$, then $(Z,E,F)$ is admissible.} Indeed, (b) holds trivially, while (a) follows from the fact that
a sequence in $E\epsilon F$ is WOT-convergent to~$0$ if and only if it is weakly null in~$E\epsilon F \sub \mathcal{L}(E^*,F)$
 (see e.g. \cite[Theorem~1.3]{col-rue}).
\end{example}

\begin{example}\label{example:triple-l1} 
Suppose that $E \not\supseteq \ell_1$. Take $Z:=E^*$ and $F:=\mathbb{R}$. 
Then we have $Z^{*} = E^{**} = \mathcal{UM}(E^{*},F)$ (see Example~\ref{example:UM2}) and, of course, SOT $=w^*$ on~$Z^*$,
so that (a) holds. If in addition $E$ is separable, then (b) also holds, i.e. $E\epsilon F = E$
is $w^*$-sequentially dense in $E^{**}$, by the Odell-Rosenthal theorem~\cite{ode-ros} (cf. \cite[Theorem~4.1]{van}). 
\end{example}

\begin{example}\label{example:projective}
Suppose that $F:=X_0^*$ for a Banach space~$X_0$. Take $Z:=E^* \hat{\otimes}_\pi X_0$ (the projective tensor product of~$E^*$ and~$X_0$).
Then: 
\begin{enumerate}
\item[(i)] $Z^*=\mathcal{L}(E^*,F)$ in the natural way (see e.g. \cite[p.~230, Corollary~2]{die-uhl-J}).
\item[(ii)] The identity map on~$Z^*$ is (WOT-to-$w^*$) sequentially continuous.
\item[(iii)] If $E^*$ is separable and either $E^*$ or~$F$ has the BAP, then $Z^*=\mathcal{UM}(E^*,F)$ and 
$(Z,E,F)$ is admissible. 
\end{enumerate}
\end{example}
\begin{proof} (ii) Let $(\varphi_n)$ be a sequence in~$Z^*=\mathcal{L}(E^*,F)$ which WOT-converges to~$0$. Then it is bounded
(by the Banach-Steinhaus theorem) and 
$$
	\langle e^*\otimes x_0,\varphi_n \rangle=\langle x_0,\varphi_n(e^*)\rangle \to 0
	\quad\mbox{for all }e^*\in E^*\mbox{ and }x_0\in X_0,
$$
hence $(\varphi_n)$ is $w^*$-null. 

(iii) Under such assumptions $E\epsilon F$ is SOT-sequentially dense in
$\mathcal{L}(E^*,F)$ (see Example~\ref{example:MS}). In particular, we have $\mathcal{L}(E^*,F)=\mathcal{UM}(E^*,F)$. 
Bearing in mind~(ii) it follows that $(Z,E,F)$ is admissible. 
\end{proof}

\begin{proof}[Proof of Theorem~\ref{theorem:equiv2}]

(iii)$\impli$(iv) By assumption we have 
$$
	|\langle S(T),z^*\rangle| \leq K \|i_\mu(T)\|_{L_p(\mu,Y)} \|z^*\|_{Z^*}
	\quad\mbox{for every }T\in U \mbox{ and }z^*\in Z^*,
$$
hence 
$$
	\|S(T)\|_Z \leq K \|i_\mu(T)\|_{L_p(\mu,Y)}
	\quad \mbox{for every }T\in U.
$$ 
Write $W:=\overline{i_\mu(U)}$. By the previous inequality, there is an operator 
$S_1:  W \to Z$ such that $S_1\circ i_\mu|_U=S$
(cf. the proof of Proposition~\ref{proposition:facto}). 
Of course, $S_2:=i_\mu|_U$ is $(\ell^s_p,\ell_p)$-controlled.
We claim that $S_1^*:Z^* \to W^*$
is $(\ell^s_q,\ell_q)$-controlled. 
Indeed, inequality~\eqref{eqn:intpqdom} reads as
$$
	|\langle i_\mu(T), S_1^* (z^*) \rangle | \le K \, \|i_\mu(T)\|_{L_p(\mu,Y)} \, \| i_\eta(z^*)\|_{L_q(\eta,F)}
$$
for every $T\in U$ and $z^*\in Z^*$.
Thus, $\|S_1^*(z^*)\|_{W^*} \leq K \| i_\eta(z^*)\|_{L_q(\eta,F)}$
for every $z^*\in Z^*$, so that $S_1^*$
is $(\ell^s_q,\ell_q)$-controlled.

(iv)$\impli$(v) This follows from Theorem~\ref{theorem:equiv}.

(v)$\impli$(i) Fix $n\in \Nat$ and take $T_1,\dots,T_n \in U$ and $z^*_1, \dots, z^*_n \in Z^*$. Then Holder's inequality 
and the fact that $S_2$ (resp.~$S_1^*$) is $(\ell^s_p,\ell_p)$-summing
(resp. $(\ell^s_q,\ell_q)$-summing) yield
\begin{multline*}
	\Big(\sum_{i=1}^n | \langle S(T_i),z^*_i \rangle|^r\Big)^{1/r}=
	\Big(\sum_{i=1}^n | \langle S_2(T_i),S_1^*(z^*_i) \rangle|^r\Big)^{1/r}
	\\\leq \Big(\sum_{i=1}^n \|S_2(T_i)\|_W^r \cdot\|S_1^*(z^*_i)\|_{W^*}^r\Big)^{1/r}
	\leq \Big( \sum_{i=1}^n \|S_2(T_i)\|_W^p\Big)^{1/p}
	\cdot \Big( \sum_{i=1}^n \|S_1^*(z_i^*)\|_{W^*}^q\Big)^{1/q}
	\\ \leq K \sup_{x^* \in B_{X^*}} \Big(\sum_{i=1}^n \|T_i(x^*)\|_Y^p\Big)^{1/p}  \cdot 
 	\sup_{e^* \in B_{E^*}} \Big(\sum_{i=1}^n \| z^*_i (e^*)\|_{F}^q\Big)^{1/q}
\end{multline*}
for some constant $K\geq 0$ independent of the $T_i$'s and $z_i^*$'s. This shows that
$S$ is $(\ell^s_p,\ell^s_q)$-dominated.

(i)$\impli$(ii) Observe that $L:=P(B_{X^*}) \times P(B_ {E^{*}})$ is a compact convex set of the locally convex space
$C(B_{X^*})^* \times C(B_{E^*})^*$, equipped with the product of the corresponding $w^*$-topologies. Fix $n\in \N$, 
$$
	\bar{T}=(T_1,\dots,T_n ) \in (U\cap X\epsilon Y)^n
	\ \ \mbox{and} 
	\ \ \bar{z^*}=(z^*_1,\dots, z^*_n) \in (Z^*\cap E\epsilon F)^n.
$$ 
Consider the function $\Delta_{\bar{T}, \bar{z^*} }: L  \to \erre$ 
given by 
$$
	\Delta_{\bar{T},\bar{z^*} }(\mu,\eta):=  
$$
$$
\sum_{i=1}^n    |\langle S(T_i), z^*_i \rangle|^r
	- K^r \frac{r}{p} 
	\int_{B_{X^*}} \sum_{i=1}^n \|T_i(\cdot)\|_{Y}^p \, d\mu - K^r \frac{r}{q}  \int_{{B_{E^{*}}}} \sum_{i=1}^n \|z^*_i(\cdot)\|_{F}^q \, d\eta,
$$
where $K\geq 0$ is a constant as in Definition~\ref{definition:pqdom}.
Clearly, $\Delta_{\bar{T}, \bar{z^*} }$ is convex and continuous,
because $T_i\in X\epsilon Y$ and $z_i^*\in E\epsilon F$ for every~$i=1,\dots,n$. 
We claim that $\Delta_{\bar{T}, \bar{z^*}}(\mu,\eta)\leq 0$ for some $(\mu,\eta)\in  L$.
Indeed, since the functions 
$$
	x^*\mapsto \sum_{i=1}^n \|T_i(x^*)\|_Y^p \quad\mbox{and}\quad  e^{*} \mapsto \sum_{i=1}^n \|z^*_i(e^*)\|_F^q
$$ 
are $w^*$-continuous on~$B_{X^*}$ and $B_{E^{*}}$, they attain their suprema at some $x_{\bar{T}}^*\in B_{X^*}$
and $e^{*}_{\bar{z^*}} \in B_{E^{*}}$, respectively. By taking into account Young's inequality, we have 
\begin{multline}\label{eqn:ultima}
	\sum_{i=1}^n | \langle S(T_i),z^*_i \rangle|^r
	\stackrel{\eqref{eqn:pqdom}}{\le} K^r
	\Big(\sum_{i=1}^n \|T_i(x_{\bar{T}}^*)\|_Y^p\Big)^{r/p}  \cdot 
 	\Big(\sum_{i=1}^n \|z^*_i (e^{*}_{\bar{z^*}})\|_{F}^q\Big)^{r/q}
 	\\
 	\leq 
 	K^r\frac{r}{p}\sum_{i=1}^n \|T_i(x_{\bar{T}}^*)\|_Y^p+K^r\frac{r}{q}\sum_{i=1}^n \|z^*_i (e^{*}_{\bar{z^*}})\|_{F}^q.
\end{multline}
If we write $\mu:=\delta_{x^*_{\bar{T}}}\in P(B_{X^*})$
and $\eta:=\delta_{e^{*}_{\bar{z^*}}}\in P(B_{E^*})$, then \eqref{eqn:ultima} yields $\Delta_{\bar{T}, \bar{z^*}}(\mu,\eta)\leq 0$, as required.

The collection $\mathcal{C}$ of all functions
$\Delta_{\bar{T},\bar{z^*}}$ as above is a convex cone in $\mathbb{R}^{L}$. Indeed, $\mathcal{C}$ 
is obviously closed under sums and we have 
$$
	\alpha\Delta_{\bar{T},\bar{z^*}}=\Delta_{(\alpha^{1/p}T_1,\dots,\alpha^{1/p}T_n),(\alpha^{1/q}z_1^*,\dots,\alpha^{1/q}z_n^*)}
$$
for all $\alpha\geq 0$.

By Ky Fan's Lemma (see e.g. \cite[Lemma~9.10]{die-alt}), there is  
$(\mu, \eta)  \in L$ such that $\Delta_{\bar{T},\bar{z^*}}(\mu,\eta) \leq 0$ for 
every $\Delta_{\bar{T},\bar{z^*}}\in \mathcal{C}$. 
In particular,
\begin{multline}\label{eqn:fromKF}
 	|\langle S(T), z^*\rangle|^r \le 
 	K^r \frac{r}{p} 
	\int_{B_{X^*}}  \|T(\cdot)\|_{Y}^p \, d\mu + K^r \frac{r}{q}  \int_{B_{E^{*}}}  \|z^*(\cdot)\|_{F}^q \, d\eta
	\\ \quad\mbox{for every }T \in U\cap X \epsilon Y
	\mbox{ and }z^*\in Z^*\cap E \epsilon F.
\end{multline}

Fix $T \in U\cap X \epsilon Y$ and $z^*\in Z^*\cap E \epsilon F$. We will check that \eqref{eqn:intpqdom} holds. Write 
$$
	a:= \Big(\int_{B_{X^*}}  \|T(\cdot)\|_{Y}^p \, d\mu \Big)^{1/p}
	\quad\mbox{and}\quad
	b:= \Big(\int_{B_{E^{*}}}  \|z^*(\cdot)\|_{F}^q \, d\eta \Big)^{1/q}.
$$ 
If either $a=0$ or $b=0$, then $\langle S(T), z^*\rangle=0$. Indeed, if $a=0$, then 
for each $n\in \mathbb{N}$ inequality~\eqref{eqn:fromKF} applied to the pair $(nT,z^*)$ yields
$$
	|\langle S(T), z^*\rangle|^r =\frac{1}{n^r}\cdot |\langle S(nT), z^*\rangle|^r \leq  \frac{1}{n^r} \cdot \frac{K^rrb^q}{q},
$$
hence $\langle S(T), z^*\rangle=0$.
A similar argument works for the case $b=0$. On the other hand, if $a\neq 0$ and $b\neq 0$, then 
inequality~\eqref{eqn:fromKF} applied to the pair $(\frac{1}{a}T,\frac{1}{b}z^*)$ yields
$$
	|\langle S(T), z^*\rangle|^r  = a^r \, b^r \, \Big|\Big\langle S\Big(\frac{1}{a}T\Big),\frac{1}{b}z^*\Big\rangle\Big|^r 
$$
$$
	\le
	K^r \, a^r \, b^r  \left( \frac{r}{p \, a^p} 
		\int_{B_{X^*}}  \|T(\cdot)\|_{Y}^p \, d\mu + \frac{r}{q \, b^q}  \int_{B_{E^{*}}}  \|z^*(\cdot) \|_{F}^q \, d\eta \right)
	=
	K^r \, a^r \, b.^r
$$
This proves~\eqref{eqn:intpqdom} when $T \in U\cap X \epsilon Y$ and $z^*\in Z^*\cap E \epsilon F$.

Finally, we prove the implication (ii)$\impli$(iii) under the additional assumptions. Fix $T \in U$ and $z^*\in Z^*$. 
By~(c) (resp.~(b)), we can take a sequence $(T_n)$ (resp. $(z_n^*)$) in 
$U\cap X \epsilon Y$ (resp. $Z^*\cap E \epsilon F$)
which SOT-converges to~$T$ (resp.~$z^*$). For each $n\in \Nat$ we have
\begin{equation}\label{eqn:tothelimit}
	|\langle S(T_n),z_n^*\rangle|
	\leq 
	K \Big(\int_{B_{X^*}}\|T_n(\cdot)\|_Y^p \, d\mu\Big)^{1/p} \cdot \Big(\int_{B_{E^*}}\|z_n^*(\cdot)\|_F^q \, d\eta\Big)^{1/q}.
\end{equation}
Since the operators $i_\mu$ and $i_\eta$ are (SOT-to-norm) sequentially continuous (see the proof of Theorem~\ref{theorem:equiv}), we have
$$
	\lim_{n\to \infty} \Big(\int_{B_{X^*}}\|T_n(\cdot)\|_Y^p \, d\mu\Big)^{1/p} = \Big(\int_{B_{X^*}}\|T(\cdot)\|_Y^p \, d\mu\Big)^{1/p}
$$
and
$$ 
	\lim_{n\to \infty} \Big(\int_{B_{E^*}}\|z_n^*(\cdot)\|_F^q \, d\eta\Big)^{1/q}  = \Big(\int_{B_{E^*}}\|z^*(\cdot)\|_F^q \, d\eta\Big)^{1/q}.
$$
Moreover, $S$ is (SOT-to-norm) sequentially continuous by assumption~(d), so the sequence $(S(T_n))$ converges to~$S(T)$ in the norm topology.
Since $(z_n^*)$ is $w^*$-convergent to~$z^*$ (by~(a)), we conclude that
\begin{multline*}
	|\langle S(T),z^*\rangle| =\lim_{n\to \infty}
	|\langle S(T_n),z_n^*\rangle|
	\\ \stackrel{\eqref{eqn:tothelimit}}{\leq} 
	K \Big(\int_{B_{X^*}}\|T(\cdot)\|_Y^p \, d\mu\Big)^{1/p} \cdot \Big(\int_{B_{E^*}}\|z^*(\cdot)\|_F^q \, d\eta\Big)^{1/q},
\end{multline*}
as we wanted. The proof is finished. 
\end{proof}

\begin{remark}\label{remark:SOT-norm}
Statement (iv) in Theorem~\ref{theorem:equiv2} implies that $S_2$ is (SOT-to-norm) sequentially continuous (by Theorem~\ref{theorem:equiv}) and so is~$S$. 
\end{remark}

\begin{corollary}\label{corollary:Kwapien}
Suppose that $Z^* \sub E \epsilon F$. Let $U$ be a subspace of~$X\epsilon Y$ and let $S:U\to Z$ be an operator. 
Then the following statements are equivalent:
\begin{enumerate}

\item[(i)] $S$ is $(\ell^s_p,\ell^s_q)$-dominated. 

\item[(ii)] There exist a constant $K\geq 0$ and measures  $\mu \in P(B_{X^*})$ and $\eta \in P(B_{E^{*}})$ such that
$$
	| \langle S(T), z^*\rangle|  \leq K \Big(\int_{B_{X^*}}\|T(\cdot)\|^p_{Y} \, d\mu\Big)^{1/p} 
	\cdot 
	\Big(\int_{B_{E^*}}\| z^*(\cdot)\|^q_{F} \, d\eta \Big)^{1/q}
$$
for every $T \in U$ and every $z^* \in Z^*$.
\item[(iii)] There exist a Banach space $W$, an
$(\ell^s_p,\ell_p)$-summing operator $S_2:U\to W$ and an operator
$S_1:W\to Z$ with $(\ell^s_q,\ell_q)$-summing adjoint such that $S$ factors as~$S= S_1 \circ S_2$.
\end{enumerate}
\end{corollary}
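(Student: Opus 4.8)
The plan is to read this corollary as the specialization of Theorem~\ref{theorem:equiv2} to the case where $U$ already sits inside $X\epsilon Y$ and $Z^*$ already sits inside $E\epsilon F$, and to harvest the equivalence directly from the chain of implications proved there. First I would check that the theorem is applicable at all: since $X\epsilon Y\sub\mathcal{UM}(X^*,Y)$ and $E\epsilon F\sub\mathcal{UM}(E^*,F)$ by Example~\ref{example:UM1}(i), the standing requirement $Z^*\sub\mathcal{UM}(E^*,F)$ and the hypothesis $U\sub\mathcal{UM}(X^*,Y)$ are automatically met, so Theorem~\ref{theorem:equiv2} may be invoked for this $S$. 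I would then match the three statements of the corollary with those of the theorem: corollary~(i) is literally theorem~(i), corollary~(ii) is theorem~(iii), and corollary~(iii) is theorem~(v).

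The decisive observation is that under the present hypotheses the intersections that appear throughout Theorem~\ref{theorem:equiv2} collapse, namely $U\cap X\epsilon Y=U$ and $Z^*\cap E\epsilon F=Z^*$. Consequently, statement~(ii) of the theorem (the integral domination \eqref{eqn:intpqdom} for $T\in U\cap X\epsilon Y$ and $z^*\in Z^*\cap E\epsilon F$) becomes \emph{verbatim identical} to statement~(iii) (the same inequality for all $T\in U$ and all $z^*\in Z^*$). The unconditional implications supplied by the theorem, (iii)$\impli$(iv)$\impli$(v)$\impli$(i)$\impli$(ii), therefore close into a cycle through the identification (ii)$=$(iii), so all five statements of the theorem are equivalent; in particular the three statements of the corollary are. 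This is the whole proof: no further analytic work is required, since everything is already contained in Theorem~\ref{theorem:equiv2}.

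The one point demanding care---and the only thing resembling an obstacle---is to notice that the admissibility of $(Z,E,F)$ and the sequential-continuity conditions (a)--(d) are not needed here. In the theorem those hypotheses are used solely to prove the single implication (ii)$\impli$(iii), that is, to upgrade the domination from the SOT-sequentially dense subspaces $U\cap X\epsilon Y$ and $Z^*\cap E\epsilon F$ to the whole of $U$ and $Z^*$; once $U\sub X\epsilon Y$ and $Z^*\sub E\epsilon F$ this passage is vacuous, because the dense subspaces coincide with the full spaces. Thus I would emphasize that the loop is closed for free by the collapse of the intersections, rather than by verifying (a)--(d). (As a sanity check one may note, via Example~\ref{example:wot-vs-weak}, that $Z^*\sub E\epsilon F$ already forces $(Z,E,F)$ to be admissible, so conditions~(a) and~(b) do hold; but the argument above shows that even this is unnecessary for the conclusion.)
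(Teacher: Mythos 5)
Your proposal is correct and matches the paper's (implicit) proof exactly: the corollary is stated there without proof precisely because it is the specialization of Theorem~\ref{theorem:equiv2} you describe, where $U\cap X\epsilon Y=U$ and $Z^*\cap E\epsilon F=Z^*$ make statements (ii) and (iii) of the theorem coincide, so the unconditional chain (iii)$\impli$(iv)$\impli$(v)$\impli$(i)$\impli$(ii) closes into a cycle without any appeal to conditions (a)--(d). Your side remark that $Z^*\sub E\epsilon F$ already yields admissibility via Example~\ref{example:wot-vs-weak}, though unnecessary, is also accurate.
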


\subsection*{Acknowledgements}
The authors thank J.M. Calabuig and P. Rueda for valuable discussions at the early stage of this work.
Research partially supported by {\em Agencia Estatal de Investigaci\'{o}n} [MTM2017-86182-P to J.R. 
and MTM2016-77054-C2-1-P to E.A.S.P., both grants cofunded by ERDF, EU]; and {\em Fundaci\'on S\'eneca} [20797/PI/18 to J.R.]


\bibliographystyle{amsplain}

\end{document}